\newcommand{\kibitz}[2]{\ifnum\Comments=1\textcolor{#1}{#2}\fi}
\theoremstyle{plain}
\newtheorem{thm}{Theorem}[section]
\newtheorem{lemma}{Lemma}[section]
\newtheorem{prop}[lemma]{Proposition}
\theoremstyle{definition}
\theoremstyle{remark}
\newtheorem{remark}[lemma]{Remark}
\numberwithin{equation}{section}
\newcommand{\p}{\partial}
\newcommand{\bn}{\begin{enumerate}}
\newcommand{\en}{\end{enumerate}}
\newcommand{\bi}{\begin{itemize}}
\newcommand{\ei}{\end{itemize}}
\newcommand{\bqq}{\begin{eqnarray*}}
\newcommand{\eqq}{\end{eqnarray*}}
\newcommand{\balg}{\begin{align*}}
\newcommand{\ealg}{\end{align*}}
\DeclareMathOperator{\Rm}{Rm}
\begin{document}
\title[On the precise asymptotics of Type-IIb MCF solutions]{On the precise asymptotics of Type-IIb solutions to mean curvature flow}

\author{James Isenberg}
\address{Department of Mathematics, University of Oregon, Eugene, OR 97403, USA}
\email{isenberg@uoregon.edu}

\author{Haotian Wu}
\address{School of Mathematics and Statistics, The University of Sydney, NSW 2006, Australia}
\email{haotian.wu@sydney.edu.au}

\author{Zhou Zhang}
\address{School of Mathematics and Statistics, The University of Sydney, NSW 2006, Australia}
\email{zhou.zhang@sydney.edu.au}

\date{\usdate\today} 

\keywords{Mean curvature flow; noncompact hypersurfaces; Type-IIb curvature blow-up; precise asymptotics.}

\subjclass[2010]{53C44 (primary), 35K59 (secondary)}


\begin{abstract}
In this paper, we study the precise asymptotics of noncompact Type-IIb solutions to the mean curvature flow. Precisely, for each real number $\gamma>0$, we construct mean curvature flow solutions, in the rotationally symmetric class, with the following precise asymptotics as $t\nearrow\infty$: (1) The highest curvature concentrates at the tip of the hypersurface (an umbilical point) and blows up at the Type-IIb rate $(2t+1)^{(\gamma-1)/2}$. (2) In a neighbourhood of the tip, the Type-IIb blow-up of the solution converges to a translating soliton known as the bowl soliton. (3) Near spatial infinity, the hypersurface has a precise growth rate depending on $\gamma$.
\end{abstract}


\maketitle

\section{Introduction}\label{sec:intro}
Given an embedded (more generally immersed) $n$-dimensional hypersurface $\varphi_0: M^n\to\mathbb{R}^{n+1}$ in Euclidean space, consider the one-parameter family of hypersurfaces
$\varphi(t):M^n\to\mathbb{R}^{n+1}$, $t_0<t<t_1$, generated by the mean curvature flow (MCF), which is specified by the evolution equation
\begin{align}\label{eq:mcf}
\partial_t\varphi (p,t) = \vec H,\quad p\in M^n,\quad t_0\leq t<t_1.
\end{align}
Geometrically, MCF deforms a hypersurface in the direction of its mean curvature vector $\vec H$, starting from the initial hypersurface $\varphi\left(t_0\right) = \varphi_0$.

Suppose we have a smooth solution to MCF on a maximal time interval $[0,T)$. Let $h(t)$ denote the second fundamental form of $\varphi(t)(M^n)$. If $T<\infty$, then we say the finite-time solution is
\begin{itemize}
\item \emph{Type-I}\, if $\sup\limits_{M^n\times[0,T)} (T-t)|h|^2<\infty$,

\item \emph{Type-IIa}\, if $\sup\limits_{M^n\times[0,T)} (T-t)|h|^2 = \infty$.
\end{itemize}
If $T=\infty$, then we say the infinite-time solution is
\begin{itemize}
\item \emph{Type-III}\, if $\sup\limits_{M^n\times[0,\infty)} t|h|^2<\infty$,

\item \emph{Type-IIb}\, if $\sup\limits_{M^n\times[0,\infty)} t|h|^2 = \infty$.
\end{itemize}
Analogous categorizations hold for singularities occurring in Ricci flow with $|h|^2$ replaced by $|\Rm|$, where $\Rm$ is the Riemann curvature tensor of a metric evolving by Ricci flow \cite{CLN06}.

The following questions (e.g., \cite{CLN06}*{Problem 8.6} in the context of Ricci flow) are natural: What can be said about the specific blow-up rates of Type-IIa or Type-IIb solutions to MCF? And what can be determined about the asymptotic behaviour of MCF solutions of these types near the maximal time of existence?

In dimension one ($n=1$), MCF of closed embedded curves in $\mathbb{R}^2$ never form a Type-IIa singularity \cite{Gray87}. However, Angenent and Vel\'azquez have constructed a MCF of a closed immersed curve in the plane that forms a Type-IIa singularity with $\sup\limits_{S^1}|h(\cdot,t)|$ blowing up at the rate $\sqrt{\frac{\ln \ln(1/(T-t))}{T-t}}$, which is faster than $(T-t)^{-1/2}$ but slower than any higher power $(T-t)^{-1/2-\epsilon}$ \cite{AV95}. In dimension two or higher ($n\geq 2$), Type-IIa singularities can form in MCF of embedded hypersurfaces. Compact examples with blow-up rates $(T-t)^{-1+1/m}$ for any integer $m\geq 3$ have been constructed by Angenent and Vel\'azquez \cite{AV97}. Noncompact examples with blow-up rates $(T-t)^{-(\gamma+1/2)}$ for any real number $\gamma\geq 1/2$ have been obtained by the authors \cite{IW19, IWZ19a}. There are corresponding results for Type-IIa solutions \cite{AIK15, Wu14} and Type-IIb solutions \cite{Wu19} in Ricci flow.

Type-IIb solutions to MCF are necessarily noncompact because MCF of any compact hypersurface must have finite time of existence by the avoidance principle \cite{CMP15}. In this paper, we construct a class of Type-IIb MCF solutions and describe their precise asymptotic properties. In particular, we determine their Type-IIb curvature blow-up rates, and also determine the behaviour of the solutions near where the curvature is the highest as well as near spatial infinity. Our construction is carried out in the class of complete noncompact hypersurfaces that are smooth, rotationally symmetric, convex\footnote{Throughout this paper, ``convex'' means ``strictly convex''.}, entire graphs with prescribed growth rate at spatial infinity.

Let us set up the notation. For any point $(x_0,x_1,\ldots,x_n)\in\mathbb{R}^{n+1}$, we write
\begin{align*}
x = x_0, \quad r = \sqrt{x_1^2 + \cdots + x_n^2}.
\end{align*}
A noncompact hypersurface $\Gamma$ is said to be rotationally symmetric if
\begin{align*}
\Gamma = \left\{(x_0,x_1,\ldots,x_n) : r = u(x_0), a\leq x_0 < \infty \right\}.
\end{align*}
We assume that $u$ is strictly concave so that the hypersurface $\Gamma$ is (strictly) convex and that $u$ is strictly increasing with $u(a)=0$ and has the asymptotic growth condition $\lim\limits_{x_0\nearrow \infty}u(x_0) = \infty$, which is a necessary condition for the solution to be Type-IIb or Type-III. Indeed, if we assume the contrasting condition that $\lim\limits_{x_0\nearrow \infty}u(x_0) = R$ for some finite positive value of $R$, then the hypersurface is a complete graph over a ball $B^n_{R}$ of radius $R$, and is asymptotic to the cylinder $S^{n-1}_R\times\mathbb{R}$, whose axis is the $x_0$-axis. MCF starting from such a hypersurface escapes to spatial infinity in \emph{finite} time by the work of Saez and Schn\"{u}rer \cite{SS14}. Among MCF solutions of this sort, the authors have exhibited a class of Type-IIa solutions and have described their precise asymptotics in \cite{IW19} and \cite{IWZ19a}.

Returning to our construction of mean curvature flows which exhibit Type-IIb behaviour, we note that the function $u$ is assumed to be smooth except at $x=a$. This particular non-smoothness of $u$ is a consequence of the choice of the coordinates; in fact, as seen below, if the time-dependent flow function $u(x,t)$ is inverted in a particular way, this irregularity is removed. We label the moving point where $u(x,t)=0$ as the \emph{tip} of the hypersurface.

We now denote by $\Gamma_t$ the solution to MCF which starts at a specified choice of the initial embedding of $\Gamma$ (as described above). If we represent $\Gamma_t$ by a graph $r=u(x,t)$, then under MCF the function $u$ satisfies the PDE
\begin{align}\label{eq:u(x,t)}
u_t & = \frac{u_{xx}}{1+u^2_x} - \frac{n-1}{u}.
\end{align}
To help carry out analysis, especially in a neighbourhood of the tip, it is useful to  define the following rescaled quantities 
\begin{align}
\tau & = \log\sqrt{2t+1}, \label{eq:rescale-tau}\\ 
y & = x (2t+1)^{-(\gamma+1)/2}, \label{eq:rescale-y}\\
\phi(y,\tau) & = u(x,t)(2t+1)^{-1/2}, \label{eq:rescale-phi}
\end{align}
where $\gamma$ is a free parameter to be specified. 

To motivate the rescaled quantities \eqref{eq:rescale-tau}--\eqref{eq:rescale-phi}, we first recall that the Type-III blow-up rate is $t^{-1/2}\sim (2t+1)^{-1/2}$, whereas the Type-IIb blow-up rate is faster than the Type-III rate. If we seek MCF solutions with the Type-IIb curvature blow-up rate $(2t+1)^{(\gamma-1)/2}$, where $\gamma>0$, then the part of the hypersurface where the curvature is Type-IIb covers a distance on the order of $(2t+1)^{(\gamma+1)/2}$, as it is moving at a speed proportional to the curvature blow-up rate on the order of $(2t+1)^{(\gamma-1)/2}$. Therefore, we rescale the $x$-coordinate according to \eqref{eq:rescale-y} to bring the hypersurface that is moving to spatial infinity to a finite distance away from the origin. The rescaling \eqref{eq:rescale-phi} is at the Type-III rate and serves as an intermediate step to capture the geometry of the hypersurface where the curvature blow-up is not Type-IIb (cf. Section \ref{formal}). To further study the geometry of the hypersurface near where the curvature is Type-IIb, we introduce a further rescaled quantity (cf.\eqref{eq:coord-z})
\begin{align}\label{eq:rescale-z}
z:=\phi e^{\gamma\tau} = u(2t+1)^{{\gamma-1}/2}.
\end{align}

Substituting the rescaled quantities \eqref{eq:rescale-tau}--\eqref{eq:rescale-phi} into equation \eqref{eq:u(x,t)}, we obtain the following PDE for $\phi(y,\tau)$:
\begin{align}\label{eq:phi(y,tau)}
\left. \p_\tau \right\vert_y \phi & = \frac{e^{-2\gamma\tau} \phi_{yy}}{1+e^{-2\gamma\tau}\phi^2_y} +(\gamma+1) y \phi_y
- \frac{(n-1)}{\phi} - \phi,
\end{align}
where $\left. \p_\tau \right\vert_y $ means taking the partial derivative in $\tau$ while keeping $y$ fixed. We note the resemblance between equation \eqref{eq:phi(y,tau)} and equation (1.3) in \cite{IW19} or \cite{IWZ19a}.

It is useful to invert the coordinates and work with 
\begin{align*}
y( \phi,\tau) & = y\left( \phi(y,\tau), \tau \right);
\end{align*}
this inversion can be done because the hypersurface under consideration is a convex entire graph (opening in the positive $x$-axis). In terms of $y(\phi,\tau)$, the equation corresponding to mean curvature flow, which is equivalent to equation \eqref{eq:phi(y,tau)} and hence equivalent to equation \eqref{eq:u(x,t)}, is the following:
\begin{align}\label{eq:y(phi,tau)}
\left. \p_\tau \right\vert_{\phi} y & = \frac{y_{\phi\phi}}{1 + e^{2\gamma\tau} y^2_\phi} + \left(\frac{n-1}{\phi} + \phi \right) y_\phi - (\gamma+1) y.
\end{align}
Equation \eqref{eq:y(phi,tau)} closely resembles its counterparts in \cite{IW19} and \cite{IWZ19a}. Note that the difference occurs in the zeroth order term and the first order term involving $\phi y_\phi$. As a result, the approach in \cite{IW19} and \cite{IWZ19a} is promising in the current setting.

We use the notation ``$A \sim B$'' to indicate that there exist positive constants $c$ and $C$ such that $cB \leq A \leq CB$. Our main result is the following.

\begin{thm}\label{thmmain}
For any choice of an integer $n\geq 2$ and a pair of real numbers $\gamma > 0$, and $\tilde A>0$, there is a family $\mathscr{G}$ of $n$-dimensional, smooth, rotationally symmetric, strictly convex, entire graphs over $\mathbb{R}^n$ such that MCF evolution $\Gamma_t$ starting at each hypersurface $\Gamma\in\mathscr{G}$ escapes to spatial infinity at $T = \infty$, and  has the following precise asymptotic properties as $t\nearrow\infty$:
\begin{enumerate}
\item The highest curvature occurs at the tip, where $u(x,t)=0$, of the hypersurface $\Gamma_t$, and it blows up at the Type-IIb rate
\begin{align}
\sup\limits_{p\in \mathbb{R}^n}|h(p,t)| & \sim (2t+1)^{(\gamma-1)/2} \quad \text{as } t\nearrow \infty.
\end{align}
\item Near the tip, the Type-IIb blow-up (rescaling) of $\Gamma_t$ converges to a translating bowl soliton; precisely,
\begin{align}
y(e^{-\gamma\tau} z,\tau) & = y(0,\tau) + e^{-2\gamma\tau}\left(\frac{\tilde P\left( (\gamma+1)\tilde A z \right)}{(\gamma+1)\tilde A} + o(1)\right)  \quad \text{as } \tau\nearrow \infty
\end{align}
uniformly on compact $z$ intervals, where $z=\phi e^{\gamma\tau}$, and $\tilde P$ is defined in equation \eqref{eq:tildeP}.
\item Away from the tip and near spatial infinity, the Type-III blow-up of $\,\Gamma_t$  grows at the rate 
\begin{align}\label{eq:yasympt}
y & \sim (\phi^2 + n - 1)^{(\gamma+1)/2} \quad \text{as } \phi \nearrow \infty.
\end{align}
\end{enumerate}
In particular, the solution constructed has the asymptotics predicted by the formal solution described in Section \ref{formal}.
\end{thm}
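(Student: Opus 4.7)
The plan is to adapt the three-region barrier construction developed in \cite{IW19, IWZ19a} to the Type-IIb equation \eqref{eq:y(phi,tau)}, which differs from its Type-IIa counterpart only in the zeroth-order term $-(\gamma+1)y$ and in the presence of the drift term $\phi\,y_\phi$. First, I would invoke the formal matched-asymptotic analysis of Section \ref{formal} to identify three distinguished regions for a candidate solution: an \emph{inner} region in which the secondary rescaling $z = \phi e^{\gamma\tau}$ reduces the leading order of \eqref{eq:y(phi,tau)} to the ODE defining the bowl soliton $\tilde{P}$; an \emph{intermediate parabolic} region in which the second-order terms $y_{\phi\phi}/(1+e^{2\gamma\tau}y_\phi^2)$ become negligible and the leading profile satisfies the linear first-order equation $\bigl(\tfrac{n-1}{\phi}+\phi\bigr)y_\phi - (\gamma+1)y = 0$, producing the growth law \eqref{eq:yasympt}; and an \emph{outer} region near spatial infinity whose leading constant is parametrized by $\tilde{A}$.

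Next, I would promote these formal profiles to genuine sub- and supersolutions of \eqref{eq:y(phi,tau)} by adding controlled perturbations of definite sign, compatible across the matching zones. The parabolic comparison principle then traps every solution launched between the barriers in a narrow tube for all $\tau \geq 0$, simultaneously yielding global existence (hence $T = \infty$), the spatial-infinity asymptotic \eqref{eq:yasympt}, and propagation of the convexity $y_\phi > 0$ needed to keep the inversion $\phi \mapsto y$ valid. The family $\mathscr{G}$ is then defined as the set of initial graphs enclosed by the barriers at $\tau = 0$; the parameter $\tilde{A}$ fixes the leading coefficient in the outer region.

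To extract parts (1) and (2), I would zoom into the tip using the Type-IIb rescaling $z = \phi e^{\gamma\tau}$. The barrier trap together with standard interior parabolic regularity furnishes uniform $C^k_{\mathrm{loc}}$ estimates on the rescaled family, and any subsequential limit as $\tau \nearrow \infty$ must be a complete, convex, rotationally symmetric translating solution of MCF whose translation speed is dictated by the matching. Uniqueness of the bowl soliton in this class then identifies the limit with $\tilde{P}$ rescaled by $(\gamma+1)\tilde{A}$ and upgrades subsequential convergence to full convergence, giving part (2). Transferring the known curvature of the bowl soliton back through the rescaling produces the rate $(2t+1)^{(\gamma-1)/2}$; under rotational symmetry, the principal curvatures are explicit in $u,u_x,u_{xx}$, and a maximum-principle argument on $|h|$ combined with the fact that the bowl soliton attains its maximum curvature at the tip pins down the location of $\sup |h|$, completing part (1).

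The hardest step is expected to be the rigorous matching between the inner (bowl-soliton) region and the intermediate region. Because the bowl soliton is not available in closed form, the barrier construction must rely on its precise known asymptotic expansion at infinity, and the error terms must be simultaneously admissible against the bowl-soliton ODE, the intermediate transport equation, and the outer polynomial profile --- while preserving the convexity $u_{xx}<0$ throughout and anchoring the curvature maximum at the tip. Handling the degenerate coefficient $\tfrac{n-1}{\phi}$ near the tip within this barrier framework, and verifying that the perturbed matched profile yields barriers for the full nonlinear equation \eqref{eq:y(phi,tau)} rather than merely its formal truncation, is expected to be the main technical burden, just as in the Type-IIa analyses of \cite{IW19, IWZ19a}.
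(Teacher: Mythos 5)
Your overall framework (formal matched asymptotics, regional sub/supersolutions, patching, comparison principle) matches the paper's strategy for the barrier trap and for item (3), though the paper works with the bounded variable $\lambda=-1/y$ and uses only two regions (interior/exterior) since there is no neck. However, there are genuine gaps in the harder parts. First, the barrier trap does not by itself "yield global existence'': the paper obtains $T=\infty$ from Ecker--Huisken's long-time existence theorem for entire Lipschitz graphs, and only then uses the comparison principle to confine the solution; this is a citable fix, but as written your logic is backwards. Second, and more seriously, your route to item (2) --- uniform $C^k_{loc}$ estimates, subsequential limits, and uniqueness of the bowl soliton among convex rotationally symmetric translators --- leaves unjustified precisely the crux: why any subsequential limit of the rescaled flow is a \emph{translating} soliton with the \emph{specific} speed $(\gamma+1)\tilde A$, and why the convergence is full (not just subsequential) with the quantitative $e^{-2\gamma\tau}(\cdot+o(1))$ error demanded by the statement. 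The paper does not argue by compactness at all; it follows Angenent--Vel\'azquez, writing $y=\tilde A+e^{-2\gamma\tau}\tilde p$, passing to the time variable $s=e^{2\gamma\tau}/(2\gamma)$, and proving convergence of $q=\tilde p_z$ to $\tilde P'$ by constructing an explicit supersolution on an expanding domain $\Sigma_\eta$. A genuinely new difficulty arises there: the drift coefficient becomes $\frac{\gamma-1}{2\gamma}$, which is \emph{negative} for $\gamma\in(0,1)$ (unlike the always-positive $\frac{m-1}{m-2}$ in the Type-IIa case), and the supersolution must be modified to absorb this; your proposal does not register this issue.

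For item (1), "a maximum-principle argument on $|h|$'' is not viable as stated, since $|h|$ blows up; one needs a scale-invariant quantity. The paper applies the maximum principle to the ratio of principal curvatures $\mathfrak{R}=\kappa_n/\kappa_1$, which satisfies a favorable evolution equation, and combines the resulting bound $\mathfrak{R}\leq C$ with the explicit estimate $\kappa_1\leq R_1^{-1}(2t+1)^{(\gamma-1)/2}$ for $z\geq R_1$ and a choice of $R_1$ so large that $CR_1^{-4}$ is beaten by the tip curvature $(\gamma+1)\tilde A$. Without some such quantitative control of the curvature in the exterior region, you cannot conclude that the supremum of $|h|$ over the entire noncompact hypersurface is attained at the tip.
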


The asymptotic condition \eqref{eq:yasympt} of our MCF solutions says that $y\sim \phi^{\gamma+1}$ as $\phi\nearrow\infty$. Using the relations $y = x (2t+1)^{-(\gamma+1)/2}$ and $\phi=u(2t+1)^{-1/2}$ to convert $y$ and $\phi$ back to the unscaled coordinates $x$ and $u$ respectively, then 
\begin{align}\label{eq:ugrowth}
\quad x &\sim u^{\gamma+1} \quad \text{as } u\nearrow\infty.
\end{align}
Since $\gamma>0$, the entire graphical hypersurfaces moving by MCF described in Theorem \ref{thmmain} have super-linear growth in $u$ (cf. \cite{Cheng}*{Table 1}). On the other hand, suppose a smooth entire graph, not one of those constructed in Theorem \ref{thmmain}, grows linearly in $u$ near spatial infinity, i.e.,  $x\sim u$ as $u\nearrow\infty$, which is the case if $\gamma=0$ in \eqref{eq:ugrowth}. Then MCF staring from this hypersurface is Type-III with the curvature blow-up rate $(2t+1)^{-1/2}$ by the work of Ecker and Huisken \cite{EH89}. For asymptotic behaviour of Type-III solutions to MCF, we refer the reader to the classical results of Ecker and Huisken \cite{EH89, EH91} and the recent work of Cheng and Sesum \cite{CS18}

The proof of Theorem \ref{thmmain} uses matched asymptotic analysis and barrier arguments for nonlinear PDE, the same strategy that has been implemented for Type-IIa solutions in \cite{IW19} and \cite{IWZ19a}. In Section \ref{formal}, we  describe the construction of the approximate (formal) solutions using formal matched asymptotics. In Section \ref{super-sub}, we use these approximate solutions to construct regional supersolutions and subsolutions to the rescaled PDE. The regional supersolutions and subsolutions are ordered and we patch them together to form barriers for the rescaled PDE in Section \ref{barriers}; a comparison principle for the barriers is also proved there. In Section \ref{existence}, we use these results to complete the proof of Theorem \ref{thmmain}.

\subsection*{Acknowledgements}
J. Isenberg is partially supported by NSF grant PHY-1707427; H. Wu thanks the support by ARC grant DE180101348; Z. Zhang thanks the support by ARC grant FT150100341.

\section{Formal solutions}\label{formal}

\subsection{The formal solutions in the form  $y(z,\tau)$ or $y(\phi, \tau)$}\label{yformal}

To derive the formal solutions, we assume that for $\tau$ large, the terms $\left. \p_\tau \right\vert_{\phi} y$ and $\displaystyle{\frac{y_{\phi\phi}}{1+e^{2\gamma\tau}y^2_\phi}}$ in equation \eqref{eq:y(phi,tau)} are negligible, so the PDE \eqref{eq:y(phi,tau)} is approximated by the ODE
\begin{align}
\label{truncd}
\left(\frac{n-1}{\phi}+\phi\right) \tilde y_\phi - (\gamma+1)\tilde y = 0, 
\end{align}
whose general solution is
\begin{align}\label{eq:truncdsoln}
\tilde y (\phi) & = C_1 \left(\phi^2 + n-1\right)^{(\gamma+1)/2},
\end{align}
where $C_1$ is an arbitrary constant, and $\phi\in [0,\infty)$. For $\gamma>0$, as we presume in this paper, $\tilde y$ is convex and grows super-linearly.

To check the consistency of the assumptions we have made in obtaining the ODE \eqref{truncd}, we substitute the solution $\tilde y$ given in \eqref{eq:truncdsoln} into the quantity $\displaystyle{\frac{y_{\phi\phi}}{1+e^{2\gamma\tau}y^2_\phi}}$, obtaining 
\begin{align*}
\frac{\tilde y_{\phi\phi}}{1 + e^{2\gamma\tau} {\tilde y_\phi}^2} & = \frac{C_1(\gamma+1)(\phi^2+n-1)^{(\gamma-1)/2}(n-1+\gamma\phi^2)}{n-1 + \left[\phi^2 + \boxed{\phi^2e^{2\gamma\tau}}\gamma(1+\gamma)^2(\phi^2 + n -1)^2 C_1^2\right]}.
\end{align*}
This suggests that  $\tilde y$ is a reasonable approximate solution, provided that the boxed term $\phi^2 e^{2\gamma\tau}$ is sufficiently large. 

As in the statement of Theorem \ref{thmmain}, we define
\begin{align}\label{eq:coord-z}
z:=\phi e^{\gamma\tau}.
\end{align}
We label the dynamic (i.e., time-dependent) region where $z=O(1)$ as the \emph{interior region} and call its complement the \emph{exterior region}. Note that the condition $z=O(1)$ is equivalent to the condition $\phi = O\left( e^{-\gamma\tau}\right)$, which corresponds to a region near the tip (at which $\phi = 0$). Since
\begin{align*}
\p_\tau|_z y = \p_\tau|_{\phi} y - \gamma z y_z,
\end{align*}
we obtain from equation \eqref{eq:y(phi,tau)} the evolution equation for $y(z,\tau)$:
\begin{align}\label{eq:y(z,tau)}
\left. \p_\tau\right\vert_z y & = \frac{y_{zz}}{e^{-2\gamma\tau} + e^{2\gamma\tau}y^2_z} +  \left( \frac{n-1}{z}e^{2\gamma\tau} + (1-\gamma) z \right) y_z - (\gamma+1)y.
\end{align}

We consider the ansatz
\begin{align}\label{eq:tipansatz-y}
y = \tilde A + e^{-2\gamma\tau} \tilde F(z,\tau),
\end{align}
where $\tilde A$ is a positive constant and $\tilde F$ is a function to be determined; in particular, $\tilde F(0,\tau)=0$. Before we determine $\tilde F$, let us provide some heuristics for this ansatz. Suppose that we want the Type-IIb curvature blow-up rate $(2t+1)^{(\gamma-1)/2}$, where $\gamma>0$, at the tip of the hypersurface moving by MCF; then we expect the geometry near the tip to be modelled by a translating soliton. So as $t\nearrow\infty$, if we rescale the (x,u)-coordinates by the Type-IIb blow-up rate near the tip, then we expect the hypersurface near the tip to be generated by the profile
\begin{align}\label{eq:heur1}
x(2t+1)^{(\gamma-1)/2} = \tilde A\cdot(2t+1)^\alpha + B\left(u(2t+1)^{(\gamma-1)/2}, t\right),
\end{align}
where $A$ is some constant, $B(\cdot,t)|_{u=0}=0$ for all $t$, and $\alpha$ is a parameter that we determine now. The profile in \eqref{eq:heur1} is equivalent to
\begin{align}\label{eq:heur2}
x = \tilde A\cdot(2t+1)^{\alpha-\frac{\gamma-1}{2}} + (2t+1)^{-(\gamma-1)/2}B\left(u(2t+1)^{(\gamma-1)/2}, t\right).
\end{align}
The tip, where $u=0$ and the curvature is prescribed to be blowing up at the Type-IIb rate $(2t+1)^{(\gamma-1)/2}$, is moving at a speed on the same order and hence covers a distance on the order of $(2t+1)^{(\gamma+1)/2}$. Therefore, comparing $(2t+1)^{(\gamma+1)/2}$ with the coefficient of $\tilde A$ in \eqref{eq:heur2}, we have $\alpha=\gamma$. As a result, \eqref{eq:heur2} becomes
\begin{align*}
x = \tilde A\cdot(2t+1)^{\frac{\gamma+1}{2}} + (2t+1)^{-(\gamma-1)/2}B\left(u(2t+1)^{(\gamma-1)/2}, t\right),
\end{align*}
which, rewritten in the $(y,\tau)$-coordinates, is just the ansatz in \eqref{eq:tipansatz-y}.

Substituting \eqref{eq:tipansatz-y} into equation \eqref{eq:y(z,tau)} yields
\begin{align}\label{eq:tildeAF}
\frac{\tilde F_{zz}}{1 + \tilde F^2_z} + (n-1) \frac{\tilde F_z}{z} & = (\gamma+1) \tilde A + e^{-2\gamma\tau}\left[ (\gamma-1)(z \tilde F_z - \tilde F) + \left.\p_\tau\right\vert_z \tilde F \right]. 
\end{align}
Continuing the formal argument, we assume that for $\tau$ very large, the term in \eqref{eq:tildeAF} with the coefficient $e^{-2\gamma\tau}$ is negligible. Equation \eqref{eq:tildeAF} then reduces  to the ODE
\begin{align}\label{eq:tildeF}
\frac{\tilde F_{zz}}{1+\tilde F^2_z} + (n-1)\frac{\tilde F_z}{z} & =  (\gamma+1) \tilde A.
\end{align}

To solve \eqref{eq:tildeF}  for $\tilde F$, we define $\tilde P(w)$ to be the unique solution to the initial value problem 
\begin{align}\label{eq:tildeP}
\frac{\tilde P_{ww}}{1+(\tilde P_w)^2} + (n-1)\frac{\tilde P_w}{w} = 1,\quad  \tilde P(0) = \tilde P_w(0) = 0.
\end{align}
We then readily verify that if  $\tilde F$ is given by
\begin{align}\label{eq:tildeF(z,tau)}
\tilde F(z,\tau) = \frac{1}{(\gamma+1) \tilde A} \tilde P\left( (\gamma+1) \tilde A z \right) + C(\tau),
\end{align}
where $C(\tau)$ is an arbitrary function of time, then $\tilde F$ satisfies \eqref{eq:tildeF}.

The initial value problem \eqref{eq:tildeP} has been solved in \cite[pp.24--25]{AV97} for general dimensions. It has a unique convex solution defined on $\mathbb{R}$ with the following asymptotics:
\begin{align}\label{eq:asymptotics-tildeP}
\tilde P(w) &= \left\{
\begin{array}{cc}
w^2/2n + o\left(w^2\right), & w\searrow 0; \vspace{6pt} \\
w^2/(2n-2) - \log w + O\left(w^{-2}\right),& w\nearrow\infty.
\end{array}
\right.
\end{align}

Rescaling back to the $(x,u)$-coordinates, then in a neighbourhood of the tip, where $u=0$, we have
\begin{align*}
x & = \tilde A (2t+1)^{(\gamma+1)/2} + C\left(\log\sqrt{2t+1}\right)(2t+1)^{(\gamma-1)/2} + \\
& \quad + \frac{(\gamma+1)\tilde A}{2n} u^2 (2t+1)^{(\gamma-1)/2} + o\left(u^2 (2t+1)^{(\gamma-1)/2} \right).
\end{align*}
For this formal solution, the curvature at the tip is
\begin{align}\label{eq:meancurv}
\left\vert h_{\text{tip}}\right\vert \sim H_{\text{tip}} & \sim \left. \frac{d^2 x}{du^2}\right   \vert_{u=0} = (\gamma+1)\tilde A  (2t+1)^{(\gamma-1)/2}.
\end{align}
It follows then that
\begin{align*}
t |h_{\text{tip}}|^2 & \sim t(2t+1)^{\gamma-1} \sim t^\gamma.
\end{align*}
Therefore, the formal solution is Type-IIb if $\gamma>0$. This argument gives us reason to believe that, once we have constructed the actual solution to MCF presuming that it is controlled by the formal solution, it will have the desired Type-IIb behaviour.

Because the speed of a hypersurface moving by MCF is given by its mean curvature $H$, it follows from \eqref{eq:meancurv} that over the time period $[t_0,\infty)$, the tip of the hypersurface, formally (i.e., as predicted by the formal solution) moves along the $x$-axis to the right from its initial position $x_0$ by the amount $\int_{t_0}^\infty H_{\text{tip}} = +\infty$. So in terms of the $x$-coordinate, the hypersurface evolving by MCF disappears off to spatial infinity as $t\nearrow \infty$. However in terms of  the $y$-coordinate,  provided that $C(\tau)=O(\tau)$, the tip remains a finite distance from the origin for all time $\tau$ since
\begin{align*}
y_0(\tau) & = \tilde A + e^{-2\gamma\tau} C(\tau) \approx \tilde A.
\end{align*}

From this point on, we assume $\gamma$ to be a fixed positive constant.

The formal solutions constructed separately in the interior and the exterior regions each involves a free parameter. Matching the formal solutions on the overlap of the two regions, we can establish an algebraic relationship between these free parameters. 
Setting $z$ equal to a constant $R$, and assuming that $\tau$ is very large, then 
\begin{align}\label{eq:match-formal-int}
y & \approx \tilde A.
\end{align}
In the exterior region, again setting $z=R$ (and therefore $\phi = Re^{-\gamma \tau}$) and again presuming very large $\tau$, we have from \eqref{eq:truncdsoln} that 
\begin{align}\label{eq:match-formal-ext}
y & \approx C_1\left(n - 1\right)^{(\gamma+1)/2}.
\end{align}
Matching \eqref{eq:match-formal-int} with \eqref{eq:match-formal-ext}, we obtain 
\begin{align}\label{eq:matching}
\tilde A = C_1\left(n-1\right)^{(\gamma+1)/2}.
\end{align}

In summary, in the interior region where $z = \phi e^{\gamma\tau} = O(1)$, we blow up the formal solution $u(t,x)$ to MCF at the prescribed Type-IIb rate $(2t+1)^{(\gamma-1)/2}$ and rescale the coordinates in accord with how fast the surface moves under mean curvature flow by setting $y = x (2t+1)^{-(\gamma+1)/2}$. Then in this interior region, the formal solution is given by 
\begin{align*}
y_{form,\;int} & = \tilde A +  e^{-2\gamma\tau} C(\tau) + e^{-2\gamma\tau} \tilde F(z),
\end{align*}
where $\tilde F$ and $ C(\tau)$ (to be specified in Lemma \ref{interior-supersub}) are related to $\tilde P$ as specified in \eqref{eq:tildeF(z,tau)}, and where $\tilde P$ is the solution to the initial value problem \eqref{eq:tildeP}. In the exterior region, where $R e^{-\gamma\tau} \leq \phi < \infty$ for some $R>0$, the formal solution takes the form
\begin{align*}
y_{form,\;ext} & = \frac{\tilde A}{\left(n-1\right)^{(\gamma+1)/2}} \left(\phi^2 + n - 1\right)^{(\gamma+1)/2}.
\end{align*}

We emphasise that the discussion in Section \ref{formal} applies to the formal solutions in the interior region and the exterior region. We show in Section \ref{existence} that the actual MCF solutions we construct also have the asymptotics predicted by the formal solution.


\subsection{The formal solutions revisited in the form  $\lambda(z,\tau)$ or $\lambda(\phi, \tau)$}\label{formallambda}

To prove the main result (Theorem \ref{thmmain}) of this paper, it is useful to also work with the quantity $\lambda := -1/y$, which is a bounded function because of the super-linear growth of the embedded hypersurface corresponding to large values of $y$. The interval of $\phi$ remains noncompact; in fact $\phi\in\mathbb{R}$.

Under MCF, the evolution equation for $\lambda$ is readily obtained by substituting the definition of $\lambda$ into \eqref{eq:y(phi,tau)}:
\begin{align}\label{eq:lambda(phi,tau)}
\left. \p_\tau \right\vert_\phi \lambda & = \frac{\lambda_{\phi\phi}-2\lambda^2_\phi/\lambda}{1+e^{2\gamma\tau}\lambda^2_\phi/\lambda^4} + \left(\frac{n-1}{\phi} + \phi\right)\lambda_\phi + (\gamma+1)\lambda.
\end{align}
The class of MCF solutions we consider here correspond to (even) solutions of equation \eqref{eq:lambda(phi,tau)} subject to the following effective boundary conditions: the super-linear growth of $y$ implies that $\lim_{|\phi|\nearrow\infty}\lambda(\phi, \tau)=0$. By the rotational symmetry, $\lambda_\phi (0,\tau) = 0$.

As in the previous analysis in terms of $y$, it is useful here to use the dilated spatial variable $z=\phi e^{\gamma\tau}$. The evolution equation for $\lambda(z,\tau)$ then takes the form
\begin{align}\label{eq:lambda(z,tau)}
\left. \p_\tau \right\vert_z \lambda & = \frac{e^{2\gamma\tau}(\lambda_{zz}-2\lambda^2_z/\lambda)}{1+e^{4\gamma\tau}\lambda^2_z/\lambda^4} + e^{2\gamma\tau}(n-1)\frac{\lambda_z}{z} + (1-\gamma)z\lambda_z + (\gamma+1)\lambda.
\end{align}

We now construct the formal solutions in terms of $\lambda(z,\tau)$ or $\lambda(\phi,\tau)$, using arguments very similar to those used above in terms of $y$. 

In the interior region, where $z=O(1)$, we use the ansatz
\begin{align*}
\lambda=-A+e^{-2\gamma\tau}F(z),
\end{align*}
where $A$ is a positive constant and $F$ is a function which we now determine. Substituting this ansatz into equation \eqref{eq:lambda(z,tau)}, we find that 
$F$ must satisfy
\begin{align}
\label{Flambda}
e^{-2\gamma\tau}\left( -2\gamma F  + \left.\partial_\tau\right\vert_z F \right) & = \frac{F_{zz} - 2e^{-2\gamma\tau} F^2_z/(-A+e^{-2\gamma\tau}F)}{1+F^2_z/(-A+e^{-2\gamma\tau} F)^4}\\
\nonumber & \quad + (n-1)\frac{F_z}{z} - (\gamma+1)A \\
\nonumber &\quad + e^{-2\gamma\tau}\left[(1-\gamma)z F_z + (\gamma+1)F \right].
\end{align}
Assuming, in our formal argument, that the terms with coefficient $e^{-2\gamma\tau}$ in equation \eqref{Flambda} can be ignored for large $\tau$, then \eqref{Flambda} reduces to the following ODE for $F$:
\begin{align}\label{eq:ODE-F}
\frac{F_{zz}}{1+F^2_z/A^4} + (n-1) \frac{F_z}{z} = (\gamma+1)A.
\end{align}
To find solutions to \eqref{eq:ODE-F}, we rescale $F$ according to
\begin{equation}
\label{FP}
F(z)= \frac{A^3}{\gamma+1} P\left(z(\gamma+1)/A\right),
\end{equation}
and determine that $P(w)$, where $w := z(\gamma+1)/A$, satisfies the ODE
\begin{align*}
\frac{P_{ww}}{1+P_w^2} + (n-1)\frac{P_{w}}{w} = 1.
\end{align*}
Subject to the initial conditions $P(0)=P_w(0)=0$ which come naturally from the geometry of our hypersurface, we can solve for $P$ uniquely (cf. equation \eqref{eq:tildeP}). Moreover, the asymptotic expansions of $P(w)$ are known:
\begin{align*}
P(w) &= \left\{
\begin{array}{cc}
\frac{1}{2n} w^2 + o\left(w^2\right), & w\searrow 0;\vspace{6pt} \\
\frac{1}{2(n-1)} w^2 - \log w + O\left(w^{-2} \right),& w\nearrow\infty.
\end{array}
\right.
\end{align*}
Consequently, the asymptotic expansions of $F(z)$ are as follows:
\begin{align}\label{eq:asymp-F}
F(z) &= \left\{
\begin{array}{cc}
\frac{(\gamma+1)A}{2n} z^2 + o\left(z^2\right), & z\searrow 0;\vspace{6pt} \\
\frac{(\gamma+1)A}{2(n-1)} z^2 - \frac{A^3}{(\gamma+1)}\log\left( (\gamma+1)z/A\right) + O\left( z^{-2}\right),& z\nearrow\infty.
\end{array}
\right.
\end{align}

In the exterior region, examining  the evolution of $\lambda(\phi,\tau)$ as governed by the PDE \eqref{eq:lambda(phi,tau)}, we assume, as part of the formal argument, that the term $\displaystyle{\frac{\lambda_{\phi\phi}-2\lambda^2_\phi/\lambda}{1+e^{2\gamma\tau}\lambda^2_\phi/\lambda^4}}$ is negligible for $\tau$ large. Then any solution of the ODE
\begin{align}\label{eq:ODE-lambdabar}
\left(\frac{n-1}{\phi} + \phi \right)\bar\lambda_\phi + (\gamma+1)\bar\lambda = 0
\end{align}
is an approximate solution to equation \eqref{eq:lambda(z,tau)}. We can solve for $\bar\lambda(\phi)$ explicitly,
\begin{align}\label{eq:lambdabar}
\bar\lambda(\phi) & = C \left(\phi^2 +n-1\right)^{-(\gamma+1)/2}
\end{align}
for an arbitrary constant $C$.



\section{Supersolutions and subsolutions}\label{super-sub}

For a differential equation of the form $\mathcal{D}[\psi]=0$, a function $\psi^+$ is a \emph{supersolution} if $\mathcal{D}[\psi^+]\ge 0$, while $\psi^-$ is a \emph{subsolution} if $\mathcal{D}[\psi^-]\le 0$. If there exist a supersolution $\psi^+$ and a subsolution $\psi^-$ for the differential operator $\mathcal{D}$, and if they satisfy the inequality $\psi^+ \ge \psi^-$, then they are called \emph{upper and lower barriers}, respectively. If $\mathcal{D}[\psi]=0$ admits solutions, then the existence of barriers $\psi^+\geq \psi^-$ implies that there exists a solution $\psi$ with $\psi^+ \ge \psi \ge \psi^-$. 

In this section, we construct subsolutions and supersolutions for the rescaled MCF PDE in the interior and the exterior regions separately, and then in the next section combine them to get the global barriers along the flow.

\subsection{Interior region}

In the interior region, we work with  $\lambda(z,\tau)$, and with the corresponding  MCF equation \eqref{eq:lambda(z,tau)}. Hence, we work with the quasilinear parabolic operator
\begin{align}\label{eq:Tz}
\mathcal{T}_z[\lambda] : = & \left. \p_\tau \right\vert_z \lambda - \frac{e^{2\gamma\tau}(\lambda_{zz}-2\lambda^2_z/\lambda)}{1+e^{4\gamma\tau}\lambda^2_z/\lambda^4} - e^{2\gamma\tau}(n-1)\frac{\lambda_z}{z} + (\gamma-1)z\lambda_z - (\gamma+1)\lambda,
\end{align}
for which we seek a subsolution and a supersolution. The result is the following.

\begin{lemma}\label{interior-supersub}
For an integer $n\geq 2$, a real number $\gamma>0$, and any pair of positive real numbers $A^{\pm}$, we define the functions $F^{\pm}$ to be the solutions to equation \eqref{eq:ODE-F} with the constants $A=A^{\pm}$ respectively.

For any fixed constants $R_1>0$, $B^{\pm}$ and $E^{\pm}$, there exist functions $Q^{\pm}:\mathbb{R}\to\mathbb{R}$, constants $D^{\pm}$, and a sufficiently large $\tau_1<\infty$ such that the functions
\begin{align}\label{eq:int-supersub}
\lambda^{\pm}_{int} (z,\tau) & := -A^{\pm} + e^{-2\gamma\tau}F^{\pm}(z) + e^{-2\gamma\tau} \left(B^{\pm}\tau + E^{\pm}\right)  + \tau e^{-4\gamma\tau}D^{\pm}Q^{\pm}(z)
\end{align}
are a supersolution ($+$) and a subsolution ($-$), respectively, of $\mathcal{T}_z[\lambda]=0$ on the interval $0 \leq |z| \leq R_1$ for all $\tau\geq\tau_1$.

The functions $Q^{\pm}$ depend on $A^{\pm}$ and $F^{\pm}(z)$ respectively. The constants $D^{\pm}$ depend on $n$, $\gamma$, $A^{\pm}$ and $B^{\pm}$ respectively, and on $R_1$.
\end{lemma}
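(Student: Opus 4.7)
The plan is to substitute the ansatz \eqref{eq:int-supersub} into $\mathcal{T}_z$ from \eqref{eq:Tz}, expand in decreasing powers of $e^{-2\gamma\tau}$ while distinguishing $\tau$-independent coefficients from $\tau$-linear coefficients, and choose $Q^\pm$ and $D^\pm$ so that the coefficient of $\tau e^{-2\gamma\tau}$ has the sign required for a supersolution ($+$) or subsolution ($-$), uniformly on $|z|\leq R_1$. All other contributions—$\tau$-independent terms of order $e^{-2\gamma\tau}$ and cross terms of order $e^{-4\gamma\tau}$ times polynomials in $\tau$—will then be dominated by this leading $\tau e^{-2\gamma\tau}$ piece once $\tau_1$ is taken sufficiently large.

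Substituting $\lambda_0 := -A^\pm+e^{-2\gamma\tau}F^\pm$ alone, the $O(1)$ part of $\mathcal{T}_z[\lambda_0]$ is exactly $(\gamma+1)A^\pm-F^\pm_{zz}/(1+(F^\pm_z)^2/(A^\pm)^4)-(n-1)F^\pm_z/z$, which vanishes identically by the defining ODE \eqref{eq:ODE-F}. Careful expansion of $-2\lambda_z^2/\lambda$ and $1+e^{4\gamma\tau}\lambda_z^2/\lambda^4$ about this leading profile yields a smooth bounded function $G^\pm(z)$ on $|z|\leq R_1$ as the $O(e^{-2\gamma\tau})$ residue. The correction $e^{-2\gamma\tau}(B^\pm\tau+E^\pm)$ is $z$-independent and feeds only into $\partial_\tau\lambda$ and $-(\gamma+1)\lambda$, contributing $B^\pm-(3\gamma+1)E^\pm$ at order $e^{-2\gamma\tau}$ and, crucially, $-(3\gamma+1)B^\pm$ at order $\tau e^{-2\gamma\tau}$.

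The role of the last correction $\tau e^{-4\gamma\tau}D^\pm Q^\pm(z)$ is to provide a further $z$-dependent contribution at the same $\tau e^{-2\gamma\tau}$ order: the factor $e^{2\gamma\tau}$ multiplying the spatial terms in $\mathcal{T}_z$ lifts this $\tau e^{-4\gamma\tau}$ piece back up to $\tau e^{-2\gamma\tau}$. A direct computation (in which the naive contribution $Q^\pm_{zz}/(1+(F^\pm_z)^2/(A^\pm)^4)$ and the denominator perturbation from $e^{4\gamma\tau}\lambda_z^2/\lambda^4$ combine into a single total $z$-derivative) shows that the induced coefficient is precisely $-D^\pm\mathcal{L}^\pm[Q^\pm]$, where
\begin{align*}
\mathcal{L}^\pm[Q] := \frac{d}{dz}\!\left(\frac{Q_z}{1+(F^\pm_z)^2/(A^\pm)^4}\right) + (n-1)\frac{Q_z}{z}
\end{align*}
is the linearization of the interior spatial operator about $F^\pm$. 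Hence the full $\tau e^{-2\gamma\tau}$ coefficient of $\mathcal{T}_z[\lambda^\pm_{int}]$ equals $-(3\gamma+1)B^\pm-D^\pm\mathcal{L}^\pm[Q^\pm]$. I would then define $Q^\pm$ as the unique smooth even solution on $[-R_1,R_1]$ to $\mathcal{L}^\pm[Q^\pm]=\mp 1$ with $Q^\pm(0)=0$; existence, smoothness, and evenness follow from standard Frobenius analysis at the regular singular point $z=0$, using the evenness of $F^\pm$ and $F^\pm_z(0)=0$. With this normalization the coefficient becomes $-(3\gamma+1)B^\pm\pm D^\pm$; choosing $D^\pm$ larger than $(3\gamma+1)|B^\pm|$ plus an explicit margin exceeding $\|G^\pm+B^\pm-(3\gamma+1)E^\pm\|_{L^\infty([-R_1,R_1])}$ and the uniform bound on the remainder, one obtains $\mathcal{T}_z[\lambda^+_{int}]>0$ and $\mathcal{T}_z[\lambda^-_{int}]<0$ for all $\tau\geq\tau_1$ with $\tau_1$ sufficiently large.

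The main obstacle is the bookkeeping of the many nonlinear cross terms generated by the four-piece ansatz: both $-2\lambda_z^2/\lambda$ and $1+e^{4\gamma\tau}\lambda_z^2/\lambda^4$ couple all four pieces of $\lambda^\pm_{int}$, and every resulting cross term must be verified to be either captured at the correct order identified above or of order $e^{-4\gamma\tau}$ times a polynomial in $\tau$ with a uniform-in-$z$ bound on $|z|\leq R_1$. This is manageable thanks to uniform $C^2$-bounds on $F^\pm$ and $Q^\pm$ over the compact interval and the fact that the leading constant $-A^\pm$ keeps $\lambda^\pm_{int}$ uniformly away from $0$ for $\tau\geq\tau_1$, which controls the $1/\lambda$ and $1/\lambda^4$ factors appearing in the nonlinearity.
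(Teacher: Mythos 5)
Your proposal is correct in strategy and essentially reconstructs the argument that the paper delegates by citation to Lemma 3.1 of \cite{IW19}/\cite{IWZ19a}: the leading $O(1)$ balance is killed by the ODE \eqref{eq:ODE-F}, the corrector $\tau e^{-4\gamma\tau}D^{\pm}Q^{\pm}$ is lifted by the $e^{2\gamma\tau}$ prefactors to produce $-D^{\pm}\mathcal{L}^{\pm}[Q^{\pm}]$ at order $\tau e^{-2\gamma\tau}$, and $Q^{\pm}$, $D^{\pm}$ are chosen to give that coefficient a definite sign dominating everything else for $\tau\geq\tau_1$. One bookkeeping slip: the piece $e^{-2\gamma\tau}(B^{\pm}\tau+E^{\pm})$ does \emph{not} feed only into $\left.\p_\tau\right\vert_z\lambda$ and $-(\gamma+1)\lambda$; being part of $\lambda$ it also perturbs $\lambda^4$ in the denominator $1+e^{4\gamma\tau}\lambda_z^2/\lambda^4$, producing an additional term of the form
\begin{align*}
\frac{4B^{\pm}F^{\pm}_{zz}(F^{\pm}_z)^2/(A^{\pm})^5}{\bigl(1+(F^{\pm}_z)^2/(A^{\pm})^4\bigr)^2}\,\tau e^{-2\gamma\tau}
\end{align*}
at the same leading order, so your claimed exact identity for the $\tau e^{-2\gamma\tau}$ coefficient is incomplete. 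This is harmless: the extra term is bounded on $|z|\leq R_1$ by a constant depending on $n,\gamma,A^{\pm},B^{\pm},R_1$ (consistent with the stated dependence of $D^{\pm}$), so it is absorbed into the margin by which $D^{\pm}$ exceeds $(3\gamma+1)|B^{\pm}|$, exactly as the paper indicates when it says the modified terms ``only change the constant $C$ in the inequalities for $D^{+}$ and $D^{-}$.''
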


\begin{proof}
Careful inspection of the argument in Lemma 3.1 of \cite{IW19} (or \cite{IWZ19a}) reveals that the definition of the function $Q$ depends on the second order term and the first order term with $\lambda_z/z$ in the operator $\mathcal{T}_z$ defined in \eqref{eq:Tz}. The terms with $z\lambda_z$ and $\lambda$ in our operator, although different here, only change the constant $C$ in the inequalities for $D^+$ and $D^-$, but do not change the construction argument in the proof of Lemma 3.1 of \cite{IW19} (or \cite{IWZ19a}). So the lemma is proved.
\end{proof}

\begin{remark}
Any positive constants $A^+$ and $A^-$ work in Lemma \ref{interior-supersub}. In Lemma \ref{int-order}, we choose ordered constants $A^+$ and $A^-$ so that $\lambda^+_{int}$ and $\lambda^-_{int}$ are ordered.
\end{remark}

\subsection{Exterior region}

In the exterior region, we work with the quantity $\lambda(\phi,\tau)$, and with the corresponding MCF equation \eqref{eq:lambda(phi,tau)}. Hence, defining the quasilinear parabolic operator
\begin{align}\label{eq:Fphi}
\mathcal{F}_\phi[\lambda] &: =  \left. \p_\tau \right\vert_\phi \lambda - \frac{\lambda_{\phi\phi}-2\lambda^2_\phi/\lambda}{1+e^{2\gamma\tau}\lambda^2_\phi/\lambda^4} - \left(\frac{n-1}{\phi}+\phi \right)\lambda_\phi - (\gamma+1)\lambda,
\end{align}
we seek a subsolution and a supersolution for this operator. The existence of these is proven in the following lemma.

\begin{lemma}\label{exterior-supersub}
For an integer $n\geq 2$ and a real number $\gamma>0$, we define\footnote{This definition is consistent with \eqref{eq:lambdabar}; therefore $\bar\lambda$ satisfies equation \eqref{eq:ODE-lambdabar}.}
\begin{align}\label{eq:barlambda}
\bar\lambda = \bar\lambda(\phi) := (\phi^2 +n-1)^{-(\gamma+1)/2}.
\end{align}

For any positive constants $c^\pm$, there exists an even function $\psi:\mathbb{R} \to \mathbb{R}$ such that for any fixed $R_2>0$, there exist a pair of constants $b^\pm$ and sufficiently large $\tau_2<\infty$, for which
\begin{align}\label{eq:ext-supersub}
\lambda^{\pm}_{ext} & = \lambda^{\pm}(\phi,\tau) := -c^{\pm} \bar\lambda(\phi) + b^{\pm} e^{-2\gamma\tau}\psi(\phi)
\end{align}
are a supersolution ($+$) and a subsolution ($-$), respectively, of $\mathcal{F}_\phi[\lambda]=0$ over the region $R_2 e^{-\gamma\tau} \leq \vert\phi\vert < \infty$ for all $\tau\geq\tau_2$. The constants $b^\pm$ depend on $n, \gamma$, $R_2$, and $c^\pm$, respectively.
\end{lemma}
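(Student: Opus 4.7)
The plan is to substitute the ansatz \eqref{eq:ext-supersub} directly into the operator $\mathcal{F}_\phi$ in \eqref{eq:Fphi} and verify the sign condition, in a manner closely parallel to the exterior-region construction in Lemma 3.2 of \cite{IW19} and \cite{IWZ19a}. The key structural feature to exploit is that the leading term $-c^\pm\bar\lambda$ is an exact solution of the ODE \eqref{eq:ODE-lambdabar} obtained from $\mathcal{F}_\phi$ by dropping the quasilinear second-order part. Consequently, all linear first- and zeroth-order contributions of $-c^\pm\bar\lambda$ to $\mathcal{F}_\phi[\lambda^\pm_{ext}]$ cancel identically, leaving only the quasilinear residual
\begin{equation*}
\mathcal{E}^\pm(\phi,\tau) \;:=\; -\,\frac{(-c^\pm\bar\lambda)_{\phi\phi} - 2(-c^\pm\bar\lambda_\phi)^2/(-c^\pm\bar\lambda)}{1+e^{2\gamma\tau}(-c^\pm\bar\lambda_\phi)^2/(-c^\pm\bar\lambda)^4},
\end{equation*}
together with cross-interaction terms that arise when the correction $b^\pm e^{-2\gamma\tau}\psi$ is also inserted into the same fraction.

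The first step is to obtain a sharp pointwise estimate on $\mathcal{E}^\pm$ throughout $R_2 e^{-\gamma\tau}\leq|\phi|<\infty$. Using the explicit form $\bar\lambda=(\phi^2+n-1)^{-(\gamma+1)/2}$, one checks that both $\bar\lambda_{\phi\phi}-2\bar\lambda_\phi^2/\bar\lambda$ and $\bar\lambda_\phi^2/\bar\lambda^4$ are controlled by explicit rational functions of $\phi$, which yields $|\mathcal{E}^\pm|=O(e^{-2\gamma\tau})$ with an explicit, $\phi$-dependent prefactor once $\tau$ is large enough that $e^{2\gamma\tau}\bar\lambda_\phi^2/\bar\lambda^4\gtrsim 1$ on the exterior region. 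The next step is to linearise $\mathcal{F}_\phi$ at $-c^\pm\bar\lambda$ and apply the linearisation to $b^\pm e^{-2\gamma\tau}\psi(\phi)$. The dominant contribution is
\begin{equation*}
b^\pm e^{-2\gamma\tau}\,\mathcal{L}[\psi] \;+\; O(e^{-4\gamma\tau}),\qquad\text{where}\qquad \mathcal{L}[\psi] \;=\; -\,\psi_{\phi\phi}\;-\;\Bigl(\frac{n-1}{\phi}+\phi\Bigr)\psi_\phi\;-\;(3\gamma+1)\psi,
\end{equation*}
with the $-\psi_{\phi\phi}$ term present only where the quasilinear denominator is close to $1$ (and suppressed where it is large, which only helps). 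I would then choose $\psi$ of the form $\psi(\phi)=(\phi^2+n-1)^{-\beta/2}$ with $\beta>3\gamma+1$, which renders $\mathcal{L}[\psi]$ pointwise positive with a power-like decay comparable to $|\mathcal{E}^\pm|$.

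With $\psi$ fixed (and independent of $R_2$, as the statement requires), I would then choose $b^+>0$ large enough that $b^+e^{-2\gamma\tau}\mathcal{L}[\psi]$ dominates $|\mathcal{E}^+|$ and all cross-interaction remainders uniformly on the exterior region, producing $\mathcal{F}_\phi[\lambda^+_{ext}]\geq 0$; symmetrically, $b^-<0$ of sufficiently large modulus gives $\mathcal{F}_\phi[\lambda^-_{ext}]\leq 0$. Inspection of the estimates shows that the required size of $b^\pm$ depends only on $n$, $\gamma$, $R_2$, and $c^\pm$, as asserted, and the threshold $\tau_2$ absorbs the $o(1)$ corrections from linearisation.

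The main obstacle is producing the estimate on $\mathcal{E}^\pm$ and on the quasilinear cross-terms that is uniform across the entire exterior region. This region spans from $|\phi|\sim R_2 e^{-\gamma\tau}$, where the denominator $1+e^{2\gamma\tau}\bar\lambda_\phi^2/\bar\lambda^4$ is close to $1$ and the full second-order operator is active, all the way to $|\phi|\to\infty$, where the same denominator is enormous and effectively suppresses the second-order term. A single choice of $\psi$, with $b^\pm$ independent of $\tau$, must beat $\mathcal{E}^\pm$ in both regimes at once; this forces a delicate balance between the decay rate of $\psi$ at infinity (governed by $\beta$) and its size at the inner boundary $|\phi|=R_2e^{-\gamma\tau}$. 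I expect this regime-patching, together with checking that the extra first-order term $\phi\,\lambda_\phi$ and the zeroth-order shift $(\gamma+1)\lambda$ in $\mathcal{F}_\phi$ (which are absent in the analogous operator of \cite{IW19, IWZ19a}) contribute only a change of constants in the comparison inequalities, to be the principal technical effort.
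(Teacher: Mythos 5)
Your overall strategy---exploit that $-c^\pm\bar\lambda$ kills the linear first- and zeroth-order part because $\bar\lambda$ solves \eqref{eq:ODE-lambdabar}, then absorb the quasilinear residual with a correction $b^\pm e^{-2\gamma\tau}\psi$---is the right skeleton and matches the paper's. But your key quantitative claim fails near the inner boundary, and with it your choice of $\psi$. The residual is
\[
\mathcal{E}^\pm \;=\; \frac{c^\pm\bigl(\bar\lambda_{\phi\phi}-2\bar\lambda_\phi^2/\bar\lambda\bigr)}{1+e^{2\gamma\tau}\,\bar\lambda_\phi^2/\bigl((c^\pm)^2\bar\lambda^4\bigr)},
\]
and since $\bar\lambda_\phi^2/\bar\lambda^4=(\gamma+1)^2\phi^2(\phi^2+n-1)^{\gamma-1}$ vanishes like $\phi^2$ at the origin while the numerator tends to a nonzero constant there, at $|\phi|=R_2e^{-\gamma\tau}$ the denominator is only $1+O(R_2^2)$ \emph{uniformly in} $\tau$. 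So $|\mathcal{E}^\pm|$ is there a fixed constant of size comparable to $R_2^{-2}$, not $O(e^{-2\gamma\tau})$; equivalently $\mathcal{E}^\pm\approx e^{-2\gamma\tau}(c^\pm)^3\Lambda$ with $\Lambda=-\frac{\gamma\phi^2+n-1}{(\gamma+1)\phi^2}\bar\lambda^3$ blowing up like $\phi^{-2}$. Your correction $b^\pm e^{-2\gamma\tau}\mathcal{L}[\psi]$ with $\psi=(\phi^2+n-1)^{-\beta/2}$ has $\mathcal{L}[\psi]$ bounded near $\phi=0$ (indeed $-\bigl(\tfrac{n-1}{\phi}+\phi\bigr)\psi'=\beta\psi$ exactly), hence is $O(e^{-2\gamma\tau})$ there and cannot dominate an $O(1)$ residual: the sign condition fails on $R_2e^{-\gamma\tau}\leq|\phi|\leq\delta$ once $\tau$ is large. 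This is precisely the ``regime-patching'' obstacle you flag at the end, but the power-law ansatz does not resolve it.

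The paper's resolution is to take $\psi$ to be the solution \eqref{eq:soln-psi} of the first-order ODE \eqref{eq:ODE-psi}, $-(1+3\gamma)\psi-\bigl(\tfrac{n-1}{\phi}+\phi\bigr)\psi'=\Lambda$, whose $\log(\phi^2)$ singularity at the origin is exactly what is needed for the linear part applied to the correction to reproduce $\Lambda$. Then $e^{2\gamma\tau}\mathcal{F}_\phi[\lambda^\pm_{ext}]=\Lambda\{(c^\pm)^3(1+O(\cdots))+b^\pm\}$, so the residual and the correction blow up at the same rate $\phi^{-2}$ and a single $\tau$-independent $b^\pm$ fixes the sign on the whole region ($b^+<0$, $b^->0$, since $\Lambda<0$ --- the opposite signs to yours, because the paper's $\mathcal{L}_1[\psi]=\Lambda<0$ whereas your $\mathcal{L}[\psi]>0$). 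The cross-terms are then handled by writing the full quotient as $(c^\pm)^3\Lambda\bigl[1+O\bigl(e^{-2\gamma\tau}\psi/\bar\lambda,\,e^{-2\gamma\tau}\psi'/\bar\lambda',\,e^{-2\gamma\tau}\psi''/\bar\lambda''\bigr)\bigr]$ and estimating the $O(\cdot)$ separately on $\delta\leq|\phi|<\infty$ and on $R_2e^{-\gamma\tau}\leq|\phi|\leq\delta$. Note finally that the $\tilde\beta\log|z|$ term produced by this $\psi$ is used essentially in the later patching (Lemma \ref{patch}), so a bounded power-law $\psi$ would also break that step.
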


\begin{proof} The functions involved are all even in $\phi$, so we need only consider $\phi\geq 0$.
	
Going through the proof of Lemma 3.2 of \cite{IW19} (or \cite{IWZ19a}) shows that the definition of the function $\psi$ does not depend on the second order term in the operator $\mathcal{F}_\phi$. Here, we define $\psi$ to be any solution of the ODE
\begin{align}\label{eq:ODE-psi}
-(1+3\gamma) \psi - \left( \frac{n-1}{\phi} + \phi \right)\psi' & = \Lambda,
\end{align}
where
\begin{align*}
\Lambda & :=-\frac{(-\bar\lambda'') - 2(-\bar\lambda')^2/(-\bar\lambda)}{(-\bar\lambda')^2/(-\bar\lambda)^4}\\
& = -\frac{\gamma\phi^2 + n - 1}{(\gamma+1)\phi^2}\bar\lambda^3\\
& < 0
\end{align*}
for all $\phi\in\mathbb{R}$. The general solution $\psi$ to this ODE is
\begin{align}\label{eq:soln-psi}
\psi(\phi) & = \bar\lambda^3\left\{ \frac{1-\gamma}{2(1+\gamma)}+C_1(\phi^2+n-1) + \frac{(\phi^2+n-1)}{2(n-1)(\gamma+1)}\left(\log(\phi^2) - \log(\phi^2+n-1)\right)\right\},
\end{align}
where $C_1$ is an arbitrary constant.

Applying the operator $\mathcal{F}_\phi$ defined in \eqref{eq:Fphi} to the function $\lambda^{+}_{ext}$ from \eqref{eq:ext-supersub}, we obtain (omitting the superscript ``+'' and the subscript ``ext'' to simplify the notation)
\begin{align*}
e^{2\lambda\tau}\mathcal{F}_\phi\left[\lambda^+_{ext}\right] & = II + b\left[ -(1+3\gamma) \psi - \left( \frac{n-1}{\phi} + \phi \right)\psi' \right],
\end{align*}
where $\psi$ solves the ODE \eqref{eq:ODE-psi} and
\begin{align*}
II & = - \frac{\lambda_{\phi\phi}-2\lambda^2_\phi/\lambda}{e^{-2\gamma\tau}+\lambda^2_\phi/\lambda^4}\\
& = c^3\Lambda \left[1 + O\left( e^{-2\gamma\tau}b\psi/(c\bar\lambda), e^{-2\gamma\tau}b\psi'/(c\bar\lambda'), e^{-2\gamma\tau}b\psi''/(c\bar\lambda'') \right) \right].
\end{align*}
So then
\begin{align*}
e^{2\lambda\tau}\mathcal{F}_\phi\left[\lambda^+_{ext}\right] & = \Lambda \left\{c^3\left[1 + O\left( e^{-2\gamma\tau}b\psi/(c\bar\lambda), e^{-2\gamma\tau}b\psi'/(c\bar\lambda'), e^{-2\gamma\tau}b\psi''/(c\bar\lambda'') \right) \right] + b\right\}.
\end{align*}

Let $c_k$ and $d_k$, where $k=0,1,2$, denote constants that depend on the now-fixed constants $n$ and $\gamma$. From \eqref{eq:barlambda} and \eqref{eq:ext-supersub} we have as $\phi\nearrow \infty$ the following asymptotics
\begin{align*}
\psi/\bar \lambda & = (\phi^2+n-1)^{-\gamma}\left[ c_0 + o( 1 ) \right],\\
\psi'/\bar \lambda' & = (\phi^2+n-1)^{-\gamma}\left[ c_1 + o( 1 ) \right],\\
\psi''/\bar \lambda'' & = (\phi^2+n-1)^{-\gamma}\left[ c_2 + o( 1 ) \right],
\end{align*}
and as $\phi\searrow 0$, the following asymptotics
\begin{align*}
\psi/\bar \lambda & = d_0 \log(\phi^2) + O(1),\\
\psi'/\bar \lambda' & = \phi^{-2} \left[ d_1 + O\left(\phi^2\log(\phi^2)\right) \right],\\
\psi''/\bar \lambda'' & = \phi^{-2} \left[ d_2 + O\left(\phi^2\log(\phi^2)\right) \right].
\end{align*}

The above asymptotics imply the following estimates.
If $\delta\leq\phi<\infty$ for some fixed $\delta>0$ (e.g., $\delta=1/2$), then we have
\begin{align*}
\left\vert O\left( e^{-2\gamma\tau}b\psi/(c\bar\lambda), e^{-2\gamma\tau}b\psi'/(c\bar\lambda'), e^{-2\gamma\tau}b\psi''/(c\bar\lambda'')\right)  \right\vert \leq b M_1 e^{-2\gamma\tau}
\end{align*}
for some constant $M_1$. Consequently, we choose $\tau_2$ sufficiently large so that $M_1e^{-2\gamma\tau}<1/(2c^3)$ for all $\tau\geq\tau_2$ (recall that $c$ is fixed), so then
\begin{align*}
e^{2\lambda\tau}\mathcal{F}_\phi\left[\lambda^+_{ext}\right] & \geq \Lambda \left\{b + c^3\left(1+b M_1 e^{-2\gamma\tau} \right)\right\}\\
& >0
\end{align*}
for $\delta\leq\phi<\infty$ if $b$ satisfies $b<-2c^3/3$. 

If $R_2e^{-2\gamma\tau}\leq \phi \leq \delta$, then we have
\begin{align*}
\left\vert O\left( e^{-2\gamma\tau}b\psi/(c\bar\lambda), e^{-2\gamma\tau}b\psi'/(c\bar\lambda'), e^{-2\gamma\tau}b\psi''/(c\bar\lambda'')\right)  \right\vert \leq b M_2 R_2^{-2}
\end{align*}
for some constant $M_2$, and so
\begin{align*}
e^{2\lambda\tau}\mathcal{F}_\phi\left[\lambda^+_{ext}\right] & \geq \Lambda \left\{b + c^3\left(1+b M_2 R_2^{-2} \right)\right\}\\
& >0
\end{align*}
for any $b$ satisfying $b<-c^3/(1+c^3M_2R_2^{-2})$. 

Therefore, there exists
\begin{align*}
b^{+}\leq \min\left\{-\frac{2\left(c^+\right)^3}{3},\,  \frac{-\left(c^+\right)^3}{1+\left(c^+\right)^3M_2R_2^{-2}} \right\}
\end{align*}
such that $\lambda^+_{ext}$ defined in \eqref{eq:ext-supersub} is a supersolution of $\mathcal{F}_\phi[\lambda]=0$ on the interval $R_2 e^{-2\gamma\tau}\leq\phi<\infty$ for all $\tau\geq\tau_2$.

By a similar argument, there exists
\begin{align*}
b^{-}\geq \max\left\{-2(c^-)^3,\,  \frac{-\left(c^-\right)^3}{1-\left(c^-\right)^3M_2R_2^{-2}} \right\}
\end{align*}
such that $\lambda^-_{ext}$ defined in \eqref{eq:ext-supersub} is a subsolution of $\mathcal{F}_\phi[\lambda]=0$ on the interval $R_2 e^{-2\gamma\tau}\leq\phi<\infty$ for all $\tau\geq\tau_2$.

Therefore, the lemma is proved.
\end{proof}

\begin{remark}
From the proof of Lemma \ref{exterior-supersub}, we always have $b^+<0$, and we can pick $b^- > 0$.
\end{remark}



\section{Upper and lower barriers}\label{barriers} 

According to Lemmata \ref{interior-supersub} and \ref{exterior-supersub}, if we choose $R_2<R_1$, then there is an overlap of the interior and exterior regions where both $\lambda^{\pm}_{int}$ and $\lambda^{\pm}_{ext}$ are defined. In order to show that the regional supersolutions $\lambda^+_{ext}$ and $\lambda^+_{int}$ together with the regional subsolutions $\lambda^-_{ext}$ and $\lambda^-_{int}$ collectively provide upper and lower barriers according to the standard $\sup$ and $\inf$ constructions for the rescaled PDE of MCF, we need to show the following:
\begin{itemize}

\item[(i)] in each region, $\lambda ^-_{int}\leqslant \lambda ^+_{int}$ and $\lambda ^-_{ext}\leqslant \lambda ^+_{ext}$;

\item[(ii)] $\lambda ^+_{int}$ and $\lambda ^+_{ext}$ patch together; i.e., $\sup\{\lambda ^+_{int}, \lambda ^+_{ext}\}$ takes the values of $\lambda ^+_{int}$ and then $\lambda ^+_{ext}$ when moving from the interior to the exterior region. Similarly for $\lambda ^-_{int}$ and $\lambda ^-_{ext}$;

\item[(iii)] the patched supersolutions and subsolutions have the required comparison relation throughout, i.e., $\lambda ^-_{ext}\leqslant \lambda ^+_{int}$ and $\lambda ^-_{int}\leqslant \lambda ^+_{ext}$ wherever they are defined, in addition to the inequalities included in (i). 

\end{itemize}

Item (i) follows from the following two lemmata, which are proved by the same line of logic used to prove Lemmata 4.1 and 4.2 in \cite{IW19} or \cite{IWZ19a}. 

\begin{lemma}
\label{int-order}
For $A^{-}>A^{+}$, there exists $\tau_3\geq\tau_1$ such that
\begin{align*}
\lambda^{\pm}_{int} = -A^{\pm} + e^{-\tau}F^{\pm}(z) + \left(B^{\pm}\tau+E^{\pm}\right) e^{-\tau} +  \tau e^{-2\tau}D^{\pm}Q^{\pm}(z)
\end{align*}
satisfy $\lambda^{-}_{int} < \lambda^{+}_{int}$ for $|z|\leqslant R_1$ and for $\tau\geq\tau_3$.
\end{lemma}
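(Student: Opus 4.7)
The plan is to examine the difference
\begin{align*}
\lambda^{+}_{int} - \lambda^{-}_{int} &= (A^{-}-A^{+}) + e^{-2\gamma\tau}\left[F^{+}(z)-F^{-}(z)\right] \\
&\quad + e^{-2\gamma\tau}\left[(B^{+}-B^{-})\tau + (E^{+}-E^{-})\right] + \tau e^{-4\gamma\tau}\left[D^{+}Q^{+}(z) - D^{-}Q^{-}(z)\right]
\end{align*}
and show it is strictly positive on $|z| \le R_1$ once $\tau$ is sufficiently large. The crucial observation is that because $A^{-}>A^{+}$, the leading term $A^{-}-A^{+}$ is a strictly positive constant independent of $\tau$ and $z$, whereas every remaining term carries a factor that decays exponentially as $\tau \nearrow \infty$.

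Next, I would verify uniform control of the correction terms on the compact interval $[-R_1, R_1]$. The functions $F^{\pm}$ are the smooth solutions of the ODE \eqref{eq:ODE-F} with $F^{\pm}(0) = F^{\pm}_{z}(0) = 0$, and $Q^{\pm}$ are the smooth functions produced in the proof of Lemma \ref{interior-supersub}; all of these are continuous (in fact smooth) in $z$ and therefore uniformly bounded on $[-R_1,R_1]$ by constants depending only on the parameters $n$, $\gamma$, $A^{\pm}$, $B^{\pm}$, $R_1$ listed in Lemma \ref{interior-supersub}. Since the constants $B^{\pm}, E^{\pm}, D^{\pm}$ are themselves $\tau$-independent, combining the boundedness of $F^{\pm}, Q^{\pm}$ with the elementary limits $e^{-2\gamma\tau}, \tau e^{-2\gamma\tau}, \tau e^{-4\gamma\tau} \to 0$ as $\tau \nearrow \infty$ shows that all correction terms are $o(1)$ uniformly in $z \in [-R_1,R_1]$.

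I would then conclude by choosing $\tau_{3} \ge \tau_{1}$ large enough that the absolute value of the sum of all correction terms does not exceed $\tfrac{1}{2}(A^{-}-A^{+})$ for all $\tau \ge \tau_{3}$. This yields
\begin{align*}
\lambda^{+}_{int}(z,\tau) - \lambda^{-}_{int}(z,\tau) \;\geq\; \tfrac{1}{2}(A^{-}-A^{+}) \;>\; 0
\end{align*}
for every $|z| \le R_1$ and every $\tau \ge \tau_{3}$, which gives the desired strict inequality $\lambda^{-}_{int} < \lambda^{+}_{int}$.

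There is no substantive analytic obstacle here; the argument is essentially bookkeeping, exploiting that the mismatch in the leading constants dominates every subleading correction for large rescaled time. The only point one must be careful about is that the constants $B^{\pm}$, $E^{\pm}$, $D^{\pm}$ supplied by Lemma \ref{interior-supersub} are genuinely independent of $\tau$, and that the smooth $z$-profiles $F^{\pm}$ and $Q^{\pm}$ are bounded on the compact interval $[-R_1,R_1]$, so that no hidden $\tau$- or $z$-growth can spoil the estimate.
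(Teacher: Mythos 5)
Your argument is correct and is essentially the one the paper intends: the paper proves Lemma \ref{int-order} only by citation to the analogous Lemma 4.1 of \cite{IW19}/\cite{IWZ19a}, whose content is exactly your observation that the strictly positive constant $A^{-}-A^{+}$ dominates the exponentially decaying correction terms uniformly on the compact interval $|z|\leq R_1$. The only caveat is cosmetic: the exponential factors in the lemma's displayed formula should read $e^{-2\gamma\tau}$ and $\tau e^{-4\gamma\tau}$ as in \eqref{eq:int-supersub}, which is the normalization you correctly used.
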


\begin{lemma}
\label{ext-order}
For $c^{+}>c^{-}$, there exists $\tau_4\geq\tau_1$ such that
\begin{align*}
\lambda^{\pm}_{ext} & = \bar\lambda^\pm(\phi) + b^{\pm} e^{-\tau}\psi(\phi)
\end{align*}
(as in Lemma \ref{exterior-supersub}) satisfy $\lambda^{-}_{ext}< \lambda^{+}_{ext}$ for $R_2e^{-\tau/2}\leqslant |\phi| < \infty$ and $\tau\geq\tau_4$.
\end{lemma}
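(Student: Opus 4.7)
The plan is to estimate $\lambda^{+}_{ext} - \lambda^{-}_{ext}$ directly from the explicit formulas in Lemma \ref{exterior-supersub}, and then argue that the leading-order piece governs the sign for $\tau$ sufficiently large and for $|\phi|$ throughout the range $R_2 e^{-\gamma\tau}\leq|\phi|<\infty$. The strategy mirrors the proof of Lemma 4.2 in \cite{IW19,IWZ19a}, and only the weights in the $\tau$-dependent parameters and the ODE for $\psi$ differ from that setting.

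First I would write
\begin{align*}
\lambda^{+}_{ext} - \lambda^{-}_{ext} = (c^{-} - c^{+})\,\bar\lambda(\phi) + (b^{+} - b^{-})\,e^{-2\gamma\tau}\psi(\phi),
\end{align*}
and observe that the first term is of definite sign and of order $\bar\lambda(\phi)$ uniformly in $\tau$, while the second is an $O(e^{-2\gamma\tau})$ correction. The conclusion then reduces to showing that the correction is strictly dominated by the leading term once $\tau$ is beyond a threshold $\tau_4$. To make this comparison uniform in $\phi$, I would control the ratio $\psi(\phi)/\bar\lambda(\phi)$ using the asymptotics already recorded in the proof of Lemma \ref{exterior-supersub}: $\psi/\bar\lambda = O((\phi^2+n-1)^{-\gamma})$ as $\phi\nearrow\infty$, and $\psi/\bar\lambda = O(\log\phi^2) + O(1)$ as $\phi\searrow 0$, the logarithm arising from the explicit form of $\psi$ in equation \eqref{eq:soln-psi}.

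The main obstacle is the inner boundary $|\phi|=R_2 e^{-\gamma\tau}$, where the logarithmic behaviour of $\psi/\bar\lambda$ contributes an additional factor of order $\tau$; the correction term is bounded there by a multiple of $\tau e^{-2\gamma\tau}\bar\lambda(\phi)$. Since $\tau e^{-2\gamma\tau}\to 0$ as $\tau\nearrow\infty$, the correction is strictly smaller than the leading term for all $\tau$ large enough. Combining this estimate near the inner boundary with the polynomial decay of $\psi/\bar\lambda$ near spatial infinity yields a single threshold $\tau_4\geq\tau_1$ beyond which the desired comparison $\lambda^{-}_{ext} < \lambda^{+}_{ext}$ holds throughout the exterior region, which completes the proof.
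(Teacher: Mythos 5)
Your overall strategy is the right one and is what the paper intends (the paper itself only cites the analogous Lemma 4.2 of \cite{IW19, IWZ19a}): split $\lambda^{+}_{ext}-\lambda^{-}_{ext}$ into a $\tau$-independent leading term plus an $O(e^{-2\gamma\tau})$ correction, control $\psi/\bar\lambda$ by its asymptotics at $\phi\nearrow\infty$ and $\phi\searrow 0$, and absorb the extra factor of $\tau$ coming from the logarithm at the inner boundary $|\phi|=R_2e^{-\gamma\tau}$ into $\tau e^{-2\gamma\tau}\to 0$. You also correctly use the weights $e^{-2\gamma\tau}$ and the inner radius $R_2e^{-\gamma\tau}$ appropriate to this paper rather than the ones appearing verbatim in the lemma statement.

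However, there is a gap at the very first step: you declare the leading term $(c^{-}-c^{+})\bar\lambda(\phi)$ to be ``of definite sign'' without checking \emph{which} sign, and under the hypothesis as stated ($c^{+}>c^{-}$) that sign is the wrong one. Since $\bar\lambda=(\phi^2+n-1)^{-(\gamma+1)/2}>0$ and Lemma \ref{exterior-supersub} defines $\lambda^{\pm}_{ext}=-c^{\pm}\bar\lambda+b^{\pm}e^{-2\gamma\tau}\psi$, the hypothesis $c^{+}>c^{-}$ makes $(c^{-}-c^{+})\bar\lambda<0$, so the dominance argument you run would prove $\lambda^{+}_{ext}<\lambda^{-}_{ext}$, the reverse of the claim; indeed, at any fixed $\phi>0$ the correction tends to $0$ while the leading term stays negative, so the stated inequality actually fails for large $\tau$ under that hypothesis. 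The ordering $\lambda^{-}_{ext}<\lambda^{+}_{ext}$ requires $c^{-}>c^{+}$, which is also what consistency with Lemma \ref{int-order} ($A^{-}>A^{+}$) forces via the matching condition \eqref{eq:choiceA}, $A^{\pm}=c^{\pm}(n-1)^{-(\gamma+1)/2}$; the hypothesis in the lemma statement is evidently a typo. With $c^{-}>c^{+}$ in hand, your estimates on $\psi/\bar\lambda$ (polynomial decay at infinity, $O(\log\phi^2)=O(\tau)$ at the inner boundary) do complete the proof exactly as you outline, so the fix is to verify the sign explicitly and record the corrected hypothesis.
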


To justify (ii), i.e., the patching of supersolutions (or subsolutions) by taking infimum (or supremum), we recall that Lemma \ref{interior-supersub} holds for any $R_1>0$ and Lemma \ref{exterior-supersub} holds for any $R_2>0$. Below, we  choose $1\ll R_2<R_1$ and patch together $\lambda ^+_{int}$ and $\lambda ^+_{ext}$, and $\lambda ^-_{int}$ and $\lambda ^-_{ext}$ in the region defined by $\{ R_2 < z < R_1 \}$. To this end, we need the following lemma.

\begin{lemma}\label{patch}
For a fixed integer $n\geq 2$ and a fixed real number $\gamma >0$, set
\begin{align}\label{eq:tildebeta}
\tilde\beta := \frac{(n-1)^{-3(\gamma+1)/2}}{\gamma+1} > 0.
\end{align} 
Let $\lambda^{+}_{int}$ and $\lambda^{-}_{int}$ be as discussed in Lemmata \ref{interior-supersub} and \ref{int-order}, and $\lambda^{+}_{ext}$ and $\lambda^{-}_{ext}$ as discussed in Lemmata \ref{exterior-supersub} and \ref{ext-order}. There are properly chosen constants $A^{\pm}>0, B^{\pm}, b^+<0, b^->0$ and $c^{\pm}>0$ satisfying
\begin{align}
A^{\pm}& = c^{\pm}(n-1)^{-(\gamma+1)/2}>0, \label{eq:choiceA}\\
B^{\pm} & = - \gamma b^{\pm}\tilde\beta, \label{eq:choiceB}
\end{align}
such that for some sufficiently large $R_1$ and $R_2$ for Lemmata \ref{interior-supersub} and \ref{exterior-supersub}, we have for $\tau\geq \tau_5$ with some sufficiently large $\tau_5$, the functions
\begin{align*}
\lambda^{+}_{int} - \lambda^{+}_{ext}, \quad\lambda^{-}_{ext} - \lambda^{-}_{int}
\end{align*} 
both strictly increase from negative to positive in the $z$-interval $(R_2, R_1)$.
\end{lemma}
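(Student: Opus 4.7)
The plan is to expand the two pieces of each patched candidate in the overlap region $R_2 < z < R_1$ (equivalently $R_2 e^{-\gamma\tau} < \phi < R_1 e^{-\gamma\tau}$) and then match them term by term in the small parameters $e^{-2\gamma\tau}$ and $1/z$. On the interior side, the large-$z$ expansion \eqref{eq:asymp-F} supplies
\[
F^\pm(z) \;=\; \frac{(\gamma+1)A^\pm}{2(n-1)}\,z^2 \;-\; \frac{(A^\pm)^3}{\gamma+1}\log z \;+\; \mathrm{const} \;+\; O(z^{-2}).
\]
On the exterior side, I would Taylor expand $\bar\lambda(\phi)=(\phi^2+n-1)^{-(\gamma+1)/2}$ at $\phi=0$ and similarly expand the explicit $\psi(\phi)$ in \eqref{eq:soln-psi}, using $\phi=ze^{-\gamma\tau}$ together with $\log(\phi^2)=2\log z-2\gamma\tau$. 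The resulting expansions of $\lambda^\pm_{ext}$ have coefficients that can be read off directly against those of $\lambda^\pm_{int}$.

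Matching then proceeds in three tiers. At the constant-in-$(z,\tau)$ level, $-A^\pm=-c^\pm(n-1)^{-(\gamma+1)/2}$ gives \eqref{eq:choiceA}; because $(\gamma+1)A^\pm/(2(n-1))=c^\pm(\gamma+1)/(2(n-1)^{(\gamma+3)/2})$, the same identity automatically aligns the $z^2 e^{-2\gamma\tau}$ coefficients from $F^\pm$ and from the quadratic Taylor term of $-c^\pm\bar\lambda$. The logarithmic singularity of $\psi$ at $\phi=0$ contributes $-b^\pm\gamma\tilde\beta\,\tau e^{-2\gamma\tau}$ to $\lambda^\pm_{ext}$; setting this equal to the $B^\pm\tau e^{-2\gamma\tau}$ term in $\lambda^\pm_{int}$ forces $B^\pm=-b^\pm\gamma\tilde\beta$, which is \eqref{eq:choiceB}. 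After these two matchings the residual collapses to
\[
\lambda^+_{int}-\lambda^+_{ext} \;=\; -\tilde\beta\bigl[(c^+)^3+b^+\bigr]\, e^{-2\gamma\tau}\log z \;+\; \bigl(\mathrm{const}+E^+\bigr)e^{-2\gamma\tau} \;+\; O(z^{-2}e^{-2\gamma\tau}) \;+\; O(z^2\tau e^{-4\gamma\tau}),
\]
so all remaining $z$-dependence is carried by the explicit logarithm plus a constant that is freely tunable through $E^+$.

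The final step is to arrange that this leading logarithm is genuinely increasing and traverses a sign change on $(R_2,R_1)$. I would further restrict $b^+$ by $b^+<-(c^+)^3$; this is compatible with the admissibility window of Lemma \ref{exterior-supersub}, both of whose bounds on $b^+$ reduce to essentially $b^+\lesssim-(c^+)^3$ once $R_2$ is large. Then the $\log z$ coefficient $-\tilde\beta[(c^+)^3+b^+]$ is strictly positive. I would now take $R_2$ large enough for the above expansions to be valid and for Lemma \ref{exterior-supersub} to accommodate $b^+<-(c^+)^3$, take $R_1\gg R_2$ large enough that $-\tilde\beta[(c^+)^3+b^+]\log(R_1/R_2)$ dominates the $O(R_2^{-2})$ and $O(R_1^2\tau e^{-2\gamma\tau})$ error sizes uniformly for $\tau\geq\tau_5$, and finally tune $E^+$ so that the leading expression vanishes at some interior $z^\ast\in(R_2,R_1)$. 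This gives $\lambda^+_{int}-\lambda^+_{ext}<0$ at $z=R_2$ and $>0$ at $z=R_1$ with strict monotonicity throughout.

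The analogous computation for $\lambda^-_{ext}-\lambda^-_{int}$ is parallel but easier: because $b^->0$ the corresponding coefficient $\tilde\beta[(c^-)^3+b^-]$ is automatically positive, so only $E^-$ needs tuning. The main obstacle is thus bookkeeping compatibility: the extra sign constraint $b^+<-(c^+)^3$ that powers monotonicity must sit inside the admissibility window of Lemma \ref{exterior-supersub}, and the $O(z^{-2})$ tail of $F^\pm$ together with the $O(\phi^2\log\phi^2)$ correction in $\psi$ (and the $\tau e^{-4\gamma\tau}D^\pm Q^\pm(z)$ term from $\lambda^\pm_{int}$) must all remain small relative to the $\log z$ residual, which is precisely what dictates how large $R_2$, $R_1$, and $\tau_5$ must be taken.
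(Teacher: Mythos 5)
Your proposal is correct and follows essentially the same route as the paper: expand $F^\pm$ for large $z$ and $\bar\lambda,\psi$ for small $\phi=ze^{-\gamma\tau}$, cancel the constant, $z^2e^{-2\gamma\tau}$, and $\tau e^{-2\gamma\tau}$ tiers via \eqref{eq:choiceA}--\eqref{eq:choiceB}, and isolate the residual $-\tilde\beta\bigl[(c^+)^3+b^+\bigr]e^{-2\gamma\tau}\log z$ whose sign is arranged by taking $b^+<-(c^+)^3$ (compatible with the window of Lemma \ref{exterior-supersub}) and whose zero is placed in $(R_2,R_1)$ by tuning $E^+$, with the subsolution case automatic since $b^->0$.
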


\begin{proof}
We prove the Lemma for $\phi \in [0, \infty)$; the proof for $\phi \in (-\infty, 0]$ follows as a consequence of the evenness of the function $\phi$.

In the interior region, using the asymptotic expansion of $F(z)$ in \eqref{eq:asymp-F}, we have that as $z\nearrow \infty$,
\begin{align*}
\lambda^{+}_{int} & = - A^{+} + B^{+}\tau e^{-2\gamma\tau} +\\
&\quad e^{-2\gamma\tau}\left[\frac{(\gamma+1)A^{+}}{2(n-1)}z^2 - \frac{(A^+)3}{(\gamma+1)}\log\left((\gamma+1)z/A^+\right)+ E^{+} + O\left(z^{-2}\right)\right]\\
&\quad + D^{+} \tau e^{-4\gamma\tau}Q^{+}(z).
\end{align*}
In the exterior region, using the asymptotic expansion that readily follows from the explicit expression for $\psi(\phi)$ in \eqref{eq:soln-psi}, we have that as $\phi\searrow 0$, 
\begin{align*}
\lambda^{+}_{{ext}} & = -c^+\bar\lambda\left(ze^{-\gamma\tau}\right) + b^+ e^{-2\gamma\tau}\psi\left(ze^{-\gamma\tau}\right)\\
& = -\frac{c^+}{(n-1)^{(\gamma+1)/2}} + \frac{c^+(\gamma+1)}{2(n-1)^{(\gamma+3)/2}}z^2 e^{-2\gamma\tau} + O\left( z^4e^{-4\gamma\tau}\right)\\
&\quad -\gamma b^+ \tau e^{-2\gamma\tau}\left[\tilde\beta + O\left(z^2e^{-2\gamma\tau}\right)\right]\\
& \quad + b^+ e^{-2\gamma\tau} \left[\tilde\beta\log|z| + d + O\left( z^2 e^{-2\gamma\tau}(1+\log|z|+\tau)\right)\right],
\end{align*}
where $\tilde{\beta}$ is defined in \eqref{eq:tildebeta} and $d = (n-1)^{-3(\gamma+1)/2}\left(\frac{1-\gamma}{2+2\gamma}+C_1(n-1)\right)$ is an arbitrary constant. 
It then follows that
\begin{align*}
\lambda^{+}_{int} - \lambda^{+}_{ext} & = \left( - A^{+} + c^{+}(n-1)^{-(\gamma+1)/2} \right) + \left(B^+ + \gamma b^+\tilde\beta\right)\tau e^{-2\gamma\tau}\\
& \quad + e^{-2\gamma\tau}\left[\frac{(\gamma+1)A^+}{2(n-1)}z^2 - \frac{c^+(\gamma+1)}{2(n-1)^{(\gamma+3)/2}}z^2\right]\\
& \quad + e^{-2\gamma\tau}\left[ -\frac{(A^+)^3}{(\gamma+1)}\log|z| - b^+\tilde\beta \log|z| + E^{+} -b^+d + O(z^{-2}) \right]\\
& \quad + O\left(e^{-4\gamma\tau}\left(\tau Q^+(z)+z^2(1+\log|z|+\tau)\right)\right).
\end{align*}
By \eqref{eq:choiceA} and $\eqref{eq:choiceB}$, the first two lines in the above expression are zero and we get
\begin{align*}
e^{2\gamma\tau}(\lambda^{+}_{int} - \lambda^{+}_{ext}) & = \left(-\frac{(A^+)^3}{(\gamma+1)} - b^+\tilde\beta \right) \log|z| +O\left(z^{-2}\right) + E^{+} -b^+d \\
& \quad + O\left(e^{-2\gamma\tau}\left(\tau Q^+(z)+z^2(1+\log|z|+\tau)\right)\right), 
\end{align*}
with its derivative with respect to $z$ given by
\begin{align*}
e^{2\gamma\tau}(\lambda^{+}_{int} - \lambda^{+}_{ext})_z & = \left(-\frac{(A^+)^3}{(\gamma+1)} - b^+\tilde\beta \right) z^{-1} +O\left(z^{-3}\right)\\
& \quad + O\left(e^{-2\gamma\tau}\left(\tau (Q^+)'+z(1+\log|z|)\right)\right). 
\end{align*}
So far, we have chosen that $A^+>0$ and $\tilde\beta>0$. Moreover, we can choose $b^+$ such that $\left(-\frac{(A^+)^3}{(\gamma+1)} - b^+\tilde\beta \right)>0$. Additionally, we know that Lemma \ref{exterior-supersub} holds for $\lambda^+_{{ext}}$ with the current choice of $b^+$. Consequently, we have the following observations regarding $\lambda^{+}_{int}-\lambda^{+}_{ext}$ for sufficiently large $\tau$:
\begin{enumerate}
\item The function $e^{2\gamma\tau}(\lambda^{+}_{int} - \lambda^{+}_{ext})$ is smooth and strictly increasing with respect to $z$ on some interval $(R, 10R)$ where $R\gg 1$. 
	
\item Fixing the constant $d$, then by adjusting the value of $E^{+}$, which is a constant independent of $\tau$, we can make sure $e^{2\gamma\tau}(\lambda^{+}_{int} - \lambda^{+}_{ext})$ has only one zero at some $z\in(R, 10R)$ while (1) holds.	
\end{enumerate}
Letting $R_2 = R$ and $R_1 = 10R$, we have that $\lambda^+_{{int}}-\lambda^+_{{ext}}$ strictly increases from negative to positive in the $z$-interval $(R_2, R_1)$

In the same way, we can deal with $\lambda^-_{int}$ and $\lambda^-_{ext}$. Clearly, we can choose the same interval $(R_2, R_1)$ by adjusting the previously chosen one if necessary. So the lemma is proved.
\end{proof}

\begin{remark}
The choices of $A^{\pm}$ and of $c^{\pm}$ in Lemma \ref{patch} are compatible with those of Lemmata \ref{int-order} and \ref{ext-order}.
\end{remark}

We can now patch the regional supersolutions and subsolutions, thereby producing the global supersolution and the global subsolution, which are consequently upper and lower barriers. More precisely, for $|\phi|\in[0,\infty)$ and $\tau\geq\tau_5$, we define $\lambda^{+}:=\lambda^{+}(\phi,\tau)$ by
\begin{align}\label{eq:upperbarrier}
\lambda^{+} : = \left\{
\begin{array}{cc}
\lambda^{+}_{int}, & |\phi|\leq R_2 e^{-\gamma\tau}, \vspace{6pt} \\ 
\inf\left\{\lambda^{+}_{int}, \lambda^{+}_{ext} \right\}, & R_2 e^{-\gamma\tau}\leq |\phi| \leq R_1 e^{-\gamma\tau}, \vspace{6pt}\\
\lambda^{+}_{ext} , & R_1 e^{-\gamma\tau} \leq |\phi| < \infty,
\end{array}
\right.
\end{align}
and similarly we define $\lambda^{-}:=\lambda^{-}(\phi,\tau)$ by
\begin{align}\label{eq:lowerbarrier}
\lambda^{-} : = \left\{
\begin{array}{cc}
\lambda^{-}_{int}, & |\phi|\leq R_2 e^{-\gamma\tau}, \vspace{6pt}\\
\sup\left\{\lambda^{-}_{int}, \lambda^{-}_{ext} \right\}, & R_2 e^{-\gamma\tau}\leq |\phi| \leq R_1 e^{-\gamma\tau}, \vspace{6pt}\\
\lambda^{-}_{ext} , & R_1e^{-\gamma\tau} \leq |\phi| < \infty,
\end{array}
\right.
\end{align}
where the above Lemma \ref{patch} is crucial in justifying the legitimate transition from the interior construction to the exterior construction. The properties of the barriers $\lambda^{\pm}$ are a straightforward consequence of the above construction and are summarized in the following proposition.
\begin{prop}\label{prop-patch}
For a fixed integer $n\geq 2$, let $\lambda^{+}$ and $\lambda^{-}$ be defined as in \eqref{eq:upperbarrier} and \eqref{eq:lowerbarrier} respectively. There exists a sufficiently large $\tau_0$ such that the following hold true for $\phi\in\mathbb{R}$ and $\tau\geq\tau_0$:
\begin{enumerate}
\item[(B1)] $\lambda^{+}$ and $\lambda^{-}$ are a supersolution ($+$) and a subsolutions ($-$) for equation \eqref{eq:lambda(phi,tau)} respectively;
\item[(B2)] $\lambda^{-} < \lambda^{+}$; 
\item[(B3)] near $\phi = 0$, $\lambda^{\pm} = \lambda^{\pm}_{int}$, and near $\phi=\infty$, $\lambda^{\pm} = \lambda^{\pm}_{ext}$; 
\item[(B4)] for any $\tau\in[\tau_0,\infty)$, $\lim\limits_{|\phi|\nearrow \infty} \lambda^{\pm} = 0$.
\end{enumerate}
\end{prop}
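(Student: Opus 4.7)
The plan is to aggregate the regional results from Lemmata \ref{interior-supersub}--\ref{patch} by setting $\tau_0:=\max\{\tau_1,\tau_2,\tau_3,\tau_4,\tau_5\}$ and fixing the parameters $A^\pm, B^\pm, b^\pm, c^\pm$ according to \eqref{eq:choiceA}--\eqref{eq:choiceB}. Properties (B3) and (B4) are then essentially immediate. (B3) is built into the piecewise definitions \eqref{eq:upperbarrier}--\eqref{eq:lowerbarrier}. For (B4) I would inspect \eqref{eq:ext-supersub}: from \eqref{eq:barlambda} we have $\bar\lambda(\phi)\to 0$ as $|\phi|\to\infty$, and the explicit formula \eqref{eq:soln-psi} shows that $\psi(\phi)$ decays at a polynomial rate as well, so that $\lambda^\pm_{ext}\to 0$ uniformly in $\tau\geq\tau_0$.

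For (B2), I would split by zone. On $|\phi|\leq R_2 e^{-\gamma\tau}$ the inequality $\lambda^-<\lambda^+$ is precisely Lemma \ref{int-order}; on $|\phi|\geq R_1 e^{-\gamma\tau}$ it is Lemma \ref{ext-order}. In the transition strip the claim $\lambda^-<\lambda^+$ unfolds into $\max\{\lambda^-_{int},\lambda^-_{ext}\}<\min\{\lambda^+_{int},\lambda^+_{ext}\}$, i.e., four pairwise inequalities. Two of these are already Lemmata \ref{int-order}--\ref{ext-order}; the cross inequalities $\lambda^-_{ext}<\lambda^+_{int}$ and $\lambda^-_{int}<\lambda^+_{ext}$ come from the leading-order matching \eqref{eq:choiceA} of Lemma \ref{patch}. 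More concretely, in that strip both $\lambda^+_{int}-\lambda^+_{ext}$ and $\lambda^-_{ext}-\lambda^-_{int}$ are $O(\tau e^{-2\gamma\tau})$, whereas the leading-order gap between $\lambda^+$ and $\lambda^-$ is of constant size $A^--A^+>0$; enlarging $\tau_0$ if needed makes the perturbation harmless.

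Property (B1) is the heart of the proposition. On each open region where $\lambda^+$ coincides with exactly one of $\lambda^+_{int}$ or $\lambda^+_{ext}$, the supersolution inequality for \eqref{eq:lambda(phi,tau)} is inherited from Lemma \ref{interior-supersub} or Lemma \ref{exterior-supersub}; the interior lemma is phrased in terms of $\mathcal{T}_z$, but $\mathcal{T}_z[\lambda]=0$ and $\mathcal{F}_\phi[\lambda]=0$ are related by the smooth change of variables $z=\phi e^{\gamma\tau}$, so the super/subsolution relation transfers. By Lemma \ref{patch}, the switching set $\{\lambda^+_{int}=\lambda^+_{ext}\}$ inside the strip is a single smooth transversal curve, so $\lambda^+$ is Lipschitz across it, and the standard fact that the pointwise infimum of two smooth classical supersolutions of a quasilinear parabolic equation is a viscosity supersolution then applies. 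The argument for $\lambda^-$ is identical after replacing infimum with supremum.

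The main obstacle is the rigorous handling of (B1) at the transition curve, where the barriers are only Lipschitz, not $C^2$. I would dispatch this either by appealing to the viscosity solution framework for quasilinear parabolic equations, or, more concretely, by observing that the comparison arguments to be deployed in Section \ref{existence} only interact with $\lambda^\pm$ through their classical values on either side of the curve and through the correct one-sided inequalities at tangency points with a candidate solution, both of which are guaranteed by the transverse crossing supplied by Lemma \ref{patch}.
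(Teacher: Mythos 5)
Your proposal is correct and follows essentially the same route as the paper, which states Proposition \ref{prop-patch} without a separate proof, declaring its properties ``a straightforward consequence of the above construction'' of Lemmata \ref{interior-supersub}--\ref{patch}; your write-up simply supplies the details the paper leaves implicit (the four pairwise inequalities in the overlap strip, the transfer between $\mathcal{T}_z$ and $\mathcal{F}_\phi$ under $z=\phi e^{\gamma\tau}$, and the treatment of the Lipschitz kink, which the paper handles via the remark that Proposition \ref{comparison} applies to continuous piecewise smooth functions).
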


We now prove a comparison principle for any pair of smooth functions such that one of them is a subsolution of equation $\mathcal{F}_\phi[\lambda] = 0$ (cf. \eqref{eq:lambda(phi,tau)}) and the other is a supersolution of the same equation. These functions need not be $\lambda^{\pm}$ constructed above, but of course, the proposition is used to justify that $\lambda^{\pm}$ are indeed barriers. We point out that our $\lambda^{\pm}$ are continuous and piecewise smooth on their domains of definition.

\begin{prop}\label{comparison}{(Comparison principle for $\mathcal{F}_{\phi}[\lambda]=0$)}
For a fixed  integer $n\geq 2$, a fixed real number $\gamma>0$, and some $\bar\tau\in(\tau_0, \infty)$, suppose that $\zeta^{+}$, $\zeta^{-}$ are any smooth non-positive supersolution ($+$) and subsolution ($-$) (not necessarily those constructed in Proposition \ref{prop-patch}) of the equation $\mathcal{F}_{\phi}[\lambda]=0$, respectively. Assume that
\begin{itemize}
\item[(C1)] $\zeta^{-}(\phi,\tau_0) < \zeta^{+}(\phi,\tau_0)$ for $\phi\in\mathbb{R}$,\\

\item[(C2)] $\lim\limits_{|\phi|\nearrow \infty}\left(\zeta^{-}(\phi,\tau) - \zeta^{+}(\phi,\tau)\right) \leq 0$ for $\tau\in[\tau_0,\bar\tau]$,\\

\end{itemize}

Then $\zeta^{-}(\phi,\tau_0) \leq \zeta^{+}(\phi,\tau_0)$ for $(\phi,\tau)\in \mathbb{R}\times [\tau_0,\bar\tau]$.
\end{prop}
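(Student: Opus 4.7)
The plan is to carry out a standard parabolic maximum-principle argument for the quasilinear operator $\mathcal{F}_\phi$, together with an auxiliary exponential perturbation in $\tau$ that upgrades the weak supersolution/subsolution inequalities into the strict ones needed at a first touching point. Arguing by contradiction, set $v := \zeta^- - \zeta^+$ and suppose $v$ takes a strictly positive value at some point of $\mathbb{R} \times (\tau_0, \bar\tau]$. For small $\varepsilon > 0$ and a large constant $K > 0$ (both to be fixed), introduce
\[
v_\varepsilon(\phi, \tau) := v(\phi, \tau) - \varepsilon e^{K(\tau - \tau_0)},
\]
with $\varepsilon$ small enough that $\sup v_\varepsilon > 0$ still holds. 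By (C1), $v_\varepsilon(\cdot, \tau_0) < -\varepsilon < 0$, while by (C2), $\limsup_{|\phi| \to \infty} v_\varepsilon(\phi, \tau) \leq -\varepsilon e^{K(\tau - \tau_0)} < 0$ uniformly on $[\tau_0, \bar\tau]$. Hence the positive supremum of $v_\varepsilon$ is attained at an interior point $(\phi_*, \tau_*) \in \mathbb{R} \times (\tau_0, \bar\tau]$.

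At $(\phi_*, \tau_*)$, the first- and second-order spatial tests, together with the first-maximum-time test, yield
\[
\zeta^-_\phi = \zeta^+_\phi, \qquad \zeta^-_{\phi\phi} \leq \zeta^+_{\phi\phi}, \qquad \zeta^-_\tau - \zeta^+_\tau \geq \varepsilon K e^{K(\tau_* - \tau_0)} > 0.
\]
I would then expand the difference $\mathcal{F}_\phi[\zeta^-] - \mathcal{F}_\phi[\zeta^+]$ at this contact point. The first-order $\lambda_\phi$ contribution cancels because $\zeta^-_\phi = \zeta^+_\phi$; the principal second-order contribution from $\lambda_{\phi\phi}$ in the quotient is non-negative because $\zeta^-_{\phi\phi} \leq \zeta^+_{\phi\phi}$ and the denominator $1 + e^{2\gamma\tau_*}\lambda_\phi^2/\lambda^4$ is positive; and the remaining $\lambda$-dependent contributions (from the numerator term $-2\lambda_\phi^2/\lambda$, from the $\lambda^4$ in the denominator, and from the zeroth-order term $-(\gamma+1)\lambda$) can be bounded in absolute value by $C\,(\zeta^- - \zeta^+)$ via a mean-value expansion, where $C$ depends only on $n$, $\gamma$, and on uniform bounds for $\zeta^\pm$ and their first derivatives on the compact region in which the positive supremum of $v_\varepsilon$ can lie. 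Combining,
\[
\mathcal{F}_\phi[\zeta^-] - \mathcal{F}_\phi[\zeta^+] \;\geq\; \varepsilon K e^{K(\tau_* - \tau_0)} - C\,(\zeta^- - \zeta^+)(\phi_*, \tau_*).
\]
Since $(\zeta^- - \zeta^+)(\phi_*, \tau_*) \leq \sup v_\varepsilon + \varepsilon e^{K(\bar\tau - \tau_0)}$ is \emph{a priori} bounded for fixed $\varepsilon$, choosing $K$ sufficiently large makes the right-hand side strictly positive, contradicting $\mathcal{F}_\phi[\zeta^-] \leq 0 \leq \mathcal{F}_\phi[\zeta^+]$. Hence $v_\varepsilon \leq 0$ on $\mathbb{R} \times [\tau_0, \bar\tau]$ for every small $\varepsilon$, and sending $\varepsilon \to 0$ gives the desired $\zeta^- \leq \zeta^+$ throughout.

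The main technical point is the quasilinear nonlinearity of $\mathcal{F}_\phi$: the quotient becomes singular as $\lambda \to 0$, so the linearization coefficients depend sensitively on how far $\zeta^\pm$ stay away from zero. However, at any finite contact point both $\zeta^\pm$ must be strictly negative (since $\zeta^-(\phi_*, \tau_*) > \zeta^+(\phi_*, \tau_*)$ combined with non-positivity of both forces $\zeta^+(\phi_*, \tau_*) < 0$, and smoothness then keeps $\zeta^\pm$ bounded away from zero in a neighbourhood), and by (C2) this contact point is constrained to a bounded region of $\mathbb{R}$. Consequently the linearization coefficients are uniformly bounded at the contact point, which is exactly what is needed for the exponential-in-$\tau$ perturbation with large $K$ to dominate and close the contradiction.
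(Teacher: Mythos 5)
Your overall architecture (exponential perturbation in $\tau$, localization via (C1)--(C2), contact-point analysis, linearization of the quasilinear quotient, large parameter, final limit) is the same as the paper's, but there is a genuine gap in how your contact point interacts with the choice of $K$. You take $(\phi_*,\tau_*)$ to be the point where $v_\varepsilon=(\zeta^--\zeta^+)-\varepsilon e^{K(\tau-\tau_0)}$ attains its \emph{positive supremum}. At that point $(\zeta^--\zeta^+)(\phi_*,\tau_*)=\sup v_\varepsilon+\varepsilon e^{K(\tau_*-\tau_0)}$, so the inequality you need is
\[
\varepsilon K e^{K(\tau_*-\tau_0)} \;>\; C\bigl(\sup v_\varepsilon+\varepsilon e^{K(\tau_*-\tau_0)}\bigr).
\]
The piece $C\varepsilon e^{K(\tau_*-\tau_0)}$ is beaten by $K>C$, but the piece $C\sup v_\varepsilon$ is \emph{not} proportional to the perturbation; since $e^{K(\tau_*-\tau_0)}$ may be of order $1$, you are forced to take $K\gtrsim \sup v_\varepsilon/\varepsilon$. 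Your remark that the right-hand side is ``a priori bounded for fixed $\varepsilon$, choosing $K$ sufficiently large'' is circular, because the bound itself contains $\varepsilon e^{K(\bar\tau-\tau_0)}$. And with $K(\varepsilon)\sim 1/\varepsilon$ the argument breaks twice: the standing requirement $\sup v_\varepsilon>0$ needs $\varepsilon e^{K(\tau-\tau_0)}<v$ somewhere, which fails for small $\varepsilon$; and even granting the contradiction, the conclusion $v\le\varepsilon e^{K(\varepsilon)(\bar\tau-\tau_0)}$ does not tend to $0$ as $\varepsilon\to0$.

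The fix is exactly the paper's formulation: take $(\phi_*,\tau_*)$ to be the \emph{first} point where $v_\varepsilon$ reaches $0$ (it is strictly negative at $\tau=\tau_0$ by (C1) and near spatial infinity by (C2)). There $(\zeta^--\zeta^+)(\phi_*,\tau_*)=\varepsilon e^{K(\tau_*-\tau_0)}$ exactly, so the zeroth-order error is itself proportional to the perturbation and $K>C$ suffices with $C$ independent of $\varepsilon$; then $\zeta^--\zeta^+\le\varepsilon e^{K(\bar\tau-\tau_0)}\to0$. (The paper implements this with $v=e^{-\mu\tau}(\zeta^+-\zeta^-)+2\epsilon$ and its first zero.) Two smaller points to tighten: the linearization constant $C$ also requires bounds on the \emph{second} derivatives of $\zeta^\pm$, because the differing denominators produce a cross term of the form $\zeta^-_{\phi\phi}\bigl(1/D^+-1/D^-\bigr)$; and your claim that both $\zeta^\pm$ stay away from zero at the contact point only follows for $\zeta^+$ --- $\zeta^-$ may vanish there, so the factors $\lambda_\phi^2/\lambda$ and $\lambda_\phi^2/\lambda^4$ need a separate short remark. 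Finally, as in the paper, (C2) is being used uniformly in $\tau\in[\tau_0,\bar\tau]$ to confine the contact point to a compact $\phi$-interval; this should be stated.
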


\begin{proof}
Assumptions (C1)--(C2) imply that given any choice of $\epsilon>0$, there exists $R=R(\epsilon)$ such that
\begin{align}\label{eq:zetabc}
\zeta^{-}(\phi,\tau) - \zeta^{+}(\phi,\tau) \leq  \epsilon, \quad\text{for }|\phi|=R \text{ and any }\tau\in[\tau_0,\bar\tau].
\end{align}
Let us define $v:= e^{-\mu \tau}(\zeta^+ - \zeta^{-}) + 2\epsilon$, where $\mu$ is to be determined. Then assumptions (C1)--(C2) and \eqref{eq:zetabc} imply that $v$ satisfies the following conditions
\begin{itemize}
\item[(C1')] $v(\phi,\tau_0)>0$ for $\phi\in[-R,R]$,\\

\item[(C2')] $v(\phi,\tau_0)\geq\epsilon>0$ for $|\phi|=R$ and any $\tau\in[\tau_0,\bar\tau]$.
\end{itemize}

We claim that $v > 0$ on $(\phi,\tau)\in [-R,R]\times [\tau_0,\bar\tau]$, where $\bar\tau<\infty$ and $\tau_0$ can be chosen to be positive because we are interested in the asymptotics near the first singular time $T$ and we can always start the flow at $T-1+\delta$ for some fixed $\delta>0$.

To prove the above claim, we suppose the contrary. Then it follows from assumptions (C1')--(C2') that there must be a first time $\tau_*\in(\tau_0,\bar\tau)$ and an interior point $\phi_*\in[-R,R]$ such that
\begin{align*}
v(\phi_*, \tau_*) = 0.
\end{align*}
Moreover, at $(\phi_*,\tau_*)$, we have
\begin{align*}
\p_\tau\vert_{\phi} v  \leq 0, &  \quad\quad\quad \zeta^{+}_{\phi\phi}  \geq \zeta^{-}_{\phi\phi}, \\
\zeta^{+}_{\phi}  = \zeta^{-}_{\phi}, & \quad\quad\quad \zeta^{+} - \zeta^{-}  =  -2\epsilon e^{\mu \tau_*}.
\end{align*}
Consequently at $(\phi_*,\tau_*)$, we have
\begin{align*}
0 & \geq e^{\mu \tau_*} \p_\tau\vert_{\phi} v \\
& = \p_\tau\vert_{\phi} (\zeta^{+} - \zeta^{-})  - \mu \tau_* (\zeta^{+}-\zeta^{-})\\
& \geq (\zeta^+ - \zeta^-) \left\{ \left. \frac{\zeta^-_{\phi\phi}(\zeta^++\zeta^-)\left((\zeta^+)^2+(\zeta^-)^2\right)}{\left(e^{2\gamma\tau}(\zeta^+_\phi)^2 + (\zeta^+)^4\right)\left(e^{2\gamma\tau}(\zeta^+_\phi)^2 + (\zeta^-)^4\right)}\right\vert_{(\phi_*,\tau_*)} \right.\\
& \quad \left. + \left.\frac{2(\zeta^+_\phi)^2}{\zeta^+\zeta^-(1+e^{2\gamma\tau}(\zeta^{+}_\phi)^2/(\zeta^{+})^4)}\right\vert_{(\phi_*,\tau_*)} + (\gamma+1) - \mu \tau_*  \right\}\\
& = -2\epsilon e^{\mu \tau_*} \left\{ (\text{bounded term independent of $\mu$}) - \mu \tau_* \right\}\\
& \geq -2\epsilon e^{\mu \tau_*} \left\{ (\text{bounded term independent of $\mu$}) - \mu \tau_0 \right\}.
\end{align*}
We recall that $\epsilon>0$ is fixed. If we choose $\mu$ sufficiently large, then  at $(\phi_*,\tau_*)$ we have 
\begin{align*}
0 & \geq \p_\tau\vert_{\phi} v > 0,
\end{align*}
which is a contradiction. Hence, the claim is true. In the proof of the claim, $\mu$ may depend on $\zeta^+$, $\zeta^-$ and $\bar\tau$, but not on $\epsilon>0$. Therefore, letting $\epsilon\to 0$, the proposition follows.
\end{proof}

\begin{remark} Proposition \ref{comparison} applies to continuous piecewise smooth functions as discussed in \cite{IW19, IWZ19a}.
\end{remark}


\section{Proof of the main theorem}\label{existence}
In this section,  we prove the main result of this paper.

\begin{proof}[Proof of Theorem \ref{thmmain}] Let $n\geq 2$ and $\gamma>0$. Let $\tau_0\geq \tau_5$, where $\tau_5$ is given in Lemma \ref{patch}.

We first patch the formal solutions in the interior and the exterior regions at $\tau=\tau_0$ to obtain a continuous piecewise smooth function $\widehat\lambda(\phi)$ defined for all $\phi\in\mathbb{R}$. Given $\tilde A>0$, we let $A :=1/\tilde A$ and set
\begin{align*}
c:=A(n-1)^{(\gamma+1)/2},\quad C_0 := A -c\left(R_1^2e^{-2\gamma\tau_0}+n-1\right)^{-(\gamma+1)/2}.
\end{align*}
Recalling  that $z=\phi e^{-\gamma\tau}$, we define
\begin{align*}
\widehat\lambda_0(\phi) := \left\{
\begin{array}{lr}
-A + e^{-2\gamma\tau_0}F(z)- e^{-2\gamma\tau_0}F(R_1) + C_0, & 0\leq|z|\leq R_1, \vspace{6pt}\\
-c(\phi^2+n-1)^{-(\gamma+1)/2}, & R_1e^{-\gamma\tau_0}\leq|\phi|<\infty,
\end{array}
\right.
\end{align*}
where $R_1$ is defined in Lemma \ref{patch}.

For any $\epsilon>0$ sufficiently small, by taking $\tau_0$ large enough, we can construct barriers $\lambda^\pm(\phi,\tau)$, as discussed in Section 4 with $A^+<A<A^-$, such that for all $\phi\in\mathbb{R}$, 
\begin{align*}
\lambda^-(\phi,\tau_0) < \widehat \lambda_0(\phi) < \lambda^+(\phi,\tau_0),\quad |\lambda^+ - \lambda^-|<\epsilon.
\end{align*}
For each choice of the continuous piecewise smooth function $\hat \lambda_0$, arguing as in \cite{IWZ19a}*{Lemma 5.4}, there is an open\footnote{The locally open condition applies near where we smooth the corner. The prescribed geometries near the tip and the spatial infinity are unaffected.} set, in the $C^0_{loc}$-topology, of smooth functions, all of which are trapped between $\lambda^(\phi,\tau_0)$ and $\lambda^+(\phi,\tau_0)$. Collecting all such trapped smooth functions, we obtain a set $\mathscr{G}$ of functions which are trapped between the subsolution $\lambda^{-}(\phi,\tau_0)$ and the supersolution $\lambda^{+}(\phi,\tau_0)$.

We now record some properties of $\lambda_0\in\mathscr{G}$. Recall that a rotationally symmetric hypersurface, whose profile function is $u(x)$, has principal curvatures  
$$\kappa_1=\cdots=\kappa_{n-1}=\frac{1}{u(1+u^2_x)^{1/2}}, \quad \kappa_n=-\frac{u_{xx}}{(1+u^2_x)^{3/2}},$$
where the first $n-1$ indices correspond to the rotation and $n$ corresponds to the graph direction. Let us define 
\begin{align*}
\mathfrak{R}:=\frac{\kappa_n}{\kappa_1} = -\frac{uu_{xx}}{1+u^2_x}.
\end{align*}
Then we readily verify that $\mathfrak{R}$ is scaling invariant and satisfies the evolution equation
\begin{align}\label{eq:pde-ratioR}
\mathfrak{R}_t=\frac{\mathfrak{R}_{xx}}{1+u^2_x}-\frac{2u_x}{u(1+u^2_x)}(1-\mathfrak{R})\mathfrak{R}_x+\frac{2u^2_x}{u^2(1+u^2_x)}[(1-\mathfrak{R}^2)+(n-2)(1-\mathfrak{R})].
\end{align}
It is straightforward to check that for any $\lambda_0\in\mathscr{G}$, we have $\mathfrak{R}\leq C$ for some positive constant $C$ (fixed in this proof) and $\lim\limits_{|x|\nearrow\infty}\mathfrak{R}=0$.
Also, cf. \cite{IWZ19a}*{Lemma5.4}, we can choose $R_1$ with 
\begin{align}\label{eq:R1}
100 C R_1^{-4} < (\gamma+1/2)\tilde A.
\end{align} 

The hypersurface corresponding to a choice of $\lambda_0\in\mathscr{G}$ is a smooth, entire, strictly convex (rotationally symmetric) hypersurface. Since it is a locally Lipschitz continuous entire graph over $\mathbb{R}^n$, MCF starting from such a hypersurface exists for all time by a classic result of Ecker and Huisken \cite{EH91}*{Theorem 5.1}. Then by the comparison principle (Proposition \ref{comparison}), the MCF solution $\lambda(\phi,\tau)$ with the initial condition $\lambda_0$ is always trapped between the barriers $\lambda^{\pm}(\phi,\tau)$ which, as $t\nearrow\infty$, both have the same asymptotic behaviour at spatial infinity given by
\begin{align*}
	\lambda \sim (\phi^2+n-1)^{-(\gamma+1)/2}.
\end{align*}
Hence, item (3) of Theorem \ref{thmmain} follows. Item (3) implies that the asymptotic growth of the hypersurface flowing by MCF is preserved, and hence $\lim\limits_{|x|\nearrow\infty}\mathfrak{R}(t)=0$ for all $t\geq t_0$. Then applying the maximum principle to equation \eqref{eq:pde-ratioR} (cf. \cite{IWZ19a}*{Lemma 5.1}), we have 
\begin{align}\label{eq:RleqC}
\mathfrak{R}\leq C
\end{align}
for all $t\geq t_0$ under MCF.

Now we proceed to justify the accurate curvature blow-up rate and singularity model as stated in items (1) and (2) of Theorem \ref{thmmain}. To study the behaviour of such a MCF solution near the tip as $\tau\nearrow\infty$, we work with $y(z,\tau)$ instead of $\lambda(z,\tau)$. Recall that $y(z,\tau)$ evolves by equation \eqref{eq:y(z,tau)}. Recall that $\tilde A = -1/A$. Define $\tilde p(z,\tau)$ by the relation
\begin{align}\label{eq:ptilde}
y(\phi,\tau) = \tilde A + e^{-\tau} \tilde p(z,\tau).
\end{align}
Then $\tilde p(z,\tau)$ satisfies the PDE, $\mathcal{B}[\tilde p] = 0$ where
\begin{align*}
\mathcal{B}[\tilde p] & = a - \left(\frac{\tilde{p}_{zz}}{1+\tilde{p}^2_z} + \frac{n-1}{z}\tilde p_{z}\right) + e^{-\tau}\left( \left.\p_\tau\right\vert_z \tilde p + z\tilde p_z - \tilde p \right).
\end{align*}

Recall that $\phi(y, \tau)$ and $y(\phi, \tau)$ denote the functions along the flow which are inverse to each other. Define 
\begin{align*}
y^{(0)}(\phi) & :=c^{-1}\left(\phi^2 + n -1 \right)^{(\gamma+1)/2},\\
\phi^{(0)}(y) & :=\sqrt{(cy)^{2/(\gamma+1)}-(n-1)}.
\end{align*}
Let $\lambda^{(0)}(\phi) :=  -1/ y^{(0)}(\phi)$. By the uniformity in the construction of the initial hypersurface and the barriers in terms of $\lambda_0$ and $\lambda^{\pm}$ , we have as $\tau\to \infty$ 
$$\lambda(\phi, \tau)\to \lambda^{(0)}(\phi)$$
locally uniformly for $\phi\in [0, \infty)$ by the comparison principle for the equation of $\lambda(\phi,\tau)$, as in Lemma 7.1 of \cite{AV97}. In particular, we obtain the uniform closeness to the barriers on the initial hypersurface by direct construction, whereas in \cite{AV97} the estimates use an Exit Lemma (cf. \cite[Lemma 3.1]{AV97}) and the information of the neck region, which is no longer present in our case. Therefore, 
$$y(\phi, \tau)\to y^{(0)}(\phi)$$
locally uniformly for $\phi\in [0, \infty)$. 

We then prove the following result corresponding to Lemma 7.2 in \cite{AV97}.  
\begin{lemma}[Type-IIb blow-up]\label{lem:type2}
Recall the function $\tilde P$ defined in \eqref{eq:tildeP} which forms part of a formal solution to MCF. We have the following asymptotic behaviour of $\tilde p$:
\begin{align}
\lim\limits_{\tau\nearrow\infty} \left( \tilde p(z,\tau) - \tilde p(0,\tau) \right) = \frac{1}{(\gamma+1)\tilde A} \tilde P\left( (\gamma+1)\tilde A z \right)
\end{align}
uniformly on compact $z$ intervals.
\end{lemma}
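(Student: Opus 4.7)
The plan is to adapt the compactness–uniqueness scheme of Angenent and Velázquez \cite{AV97}*{Lemma~7.2} to the present setting. First I would translate the trapping $\lambda^-(\phi,\tau) \le \lambda(\phi,\tau) \le \lambda^+(\phi,\tau)$ from Proposition \ref{prop-patch} through the relations $\lambda = -1/y$ and $y = \tilde A + e^{-\tau}\tilde p$ into uniform $C^0$ bounds for $\tilde p(z,\tau) - \tilde p(0,\tau)$ on any compact $z$-interval, valid for all $\tau$ large. Since the interior barriers $\lambda^{\pm}_{int}$ are built from the soliton profiles $F^{\pm}$ with $A^{+}<\tilde A^{-1}<A^{-}$ that can be chosen arbitrarily close to $\tilde A^{-1}$, and since their leading pieces collapse after subtracting the value at the tip to $\frac{1}{(\gamma+1)\tilde A}\tilde P((\gamma+1)\tilde A z)$, the trapping already pins down the correct candidate limit; what remains is to promote this $C^0$ statement to genuine convergence.

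Next I would upgrade the $C^0$ control to $C^{2,\alpha}_{loc}$ compactness for the family $\{\tilde p(\cdot,\tau) - \tilde p(0,\tau)\}_{\tau\geq\tau_0}$. The strict convexity of each hypersurface, preserved under MCF, gives $\tilde p_{zz}\geq 0$; an interior gradient estimate of Ecker–Huisken type \cite{EH91} applied to the rescaled graph, combined with the Krylov–Safonov and Schauder theory for the quasilinear parabolic operator $\mathcal B$, provides the higher regularity. Given such bounds, from any sequence $\tau_k\nearrow\infty$ I can extract a subsequence along which $\tilde p(\cdot,\tau_k) - \tilde p(0,\tau_k)$ converges in $C^2_{loc}$ to a smooth limit $Q(z)$ with $Q(0)=0$, and with $Q_z(0)=0$ by the reflective symmetry of $\tilde p$ across $z=0$.

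Passing to the limit in $\mathcal{B}[\tilde p]=0$ as $\tau\to\infty$, the $e^{-\tau}$-weighted time-derivative and drift terms drop out, so $Q$ satisfies the autonomous ODE
\begin{equation*}
\frac{Q_{zz}}{1+Q_z^2} + \frac{n-1}{z}Q_z = a,
\end{equation*}
where matching the leading-order $\mathcal B$-computation with the formal analysis of Section \ref{yformal} (cf.\ equation \eqref{eq:tildeF}) identifies $a=(\gamma+1)\tilde A$. One then checks directly that $Q(z) = \frac{1}{(\gamma+1)\tilde A}\tilde P\bigl((\gamma+1)\tilde A z\bigr)$ solves this ODE, and uniqueness for the initial value problem \eqref{eq:tildeP} (with $\tilde P(0)=\tilde P_w(0)=0$) forces $Q$ to equal this function. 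Since every subsequential limit is the same, the full family converges, proving the lemma.

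The main obstacle will be the uniform gradient control on $\tilde p_z$ required to pass to the limit through the nonlinear term $(1+\tilde p_z^2)^{-1}$: the rescaling $\tilde p = e^{\tau}(y-\tilde A)$ amplifies derivatives, so the bound cannot come from the $C^0$ trapping alone. The workable route is to squeeze $\lambda^+$ and $\lambda^-$ together (making $|\lambda^+-\lambda^-|<\varepsilon$ in the interior region, exactly as arranged in the proof of Theorem \ref{thmmain}) and combine this with the preserved convexity $\tilde p_{zz}\geq 0$, so that the interior Ecker–Huisken estimate provides the missing gradient control uniformly in $\tau$. This is the step where the uniformity in the barrier construction from Section \ref{barriers}, rather than any external Exit Lemma as in \cite{AV97}, is decisive.
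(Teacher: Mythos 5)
Your proposal replaces the paper's argument with a compactness--uniqueness scheme: extract $C^{2,\alpha}_{loc}$ subsequential limits of $\tilde p(\cdot,\tau)-\tilde p(0,\tau)$, pass to the limit in $\mathcal B[\tilde p]=0$ to get the autonomous ODE, and invoke uniqueness of the IVP \eqref{eq:tildeP}. The paper instead follows \cite{AV97}*{Lemma 7.2} closely: it reduces to convergence of $q=\tilde p_z$, switches to the time variable $s=e^{2\gamma\tau}/2\gamma$, and constructs explicit super/subsolutions $Q(z)+L^{\pm}(z)/s$ for the divergence-form equation $\mathcal B[q]=0$ on the expanding domain $\Sigma_\eta=\{0\le z\le\sqrt{\eta s}\}$; the only genuinely new ingredient is a modified supersolution to handle the coefficient $\tfrac{\gamma-1}{2\gamma}$, which (unlike $\tfrac{m-1}{m-2}$ in \cite{AV97}) can be negative when $\gamma\in(0,1)$. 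Your route is different in kind, and it has a genuine gap at exactly the step you flag as the ``main obstacle.''

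The gap is that none of the cited ingredients produce uniform $C^0$ or $C^1$ bounds for $\tilde p(\cdot,\tau)-\tilde p(0,\tau)$ on compact $z$-intervals. The barrier trapping controls $y-\tilde A$ only up to the fixed gap $A^{-}-A^{+}=O(\varepsilon)$ (plus $O(\tau e^{-2\gamma\tau})$), and this does not improve as $\tau\to\infty$ for a fixed pair of barriers; after multiplying by $e^{2\gamma\tau}$ it is vacuous at the scale of $\tilde p$, and taking differences $\tilde p(z,\tau)-\tilde p(0,\tau)$ does not help since the upper and lower barriers are separated by $\varepsilon e^{2\gamma\tau}$ in the $\tilde p$ variable. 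Convexity gives $0\le\tilde p_z(z,\tau)\le\tilde p_z(R,\tau)$ for $z\le R$, but the anchor value $\tilde p_z(R,\tau)=e^{\gamma\tau}y_\phi(Re^{-\gamma\tau},\tau)$ sits at $\phi=Re^{-\gamma\tau}\to0$, where the outer convergence $y\to y^{(0)}$ (valid at fixed $\phi$) combined with convexity only yields $y_\phi\lesssim1$, hence $\tilde p_z(R,\tau)\lesssim e^{\gamma\tau}$, which is useless. The Ecker--Huisken interior estimates in turn need gradient control on the Type-IIb scale as input, so invoking them here is circular: the uniform gradient bound on the rescaled solution is essentially equivalent to what the lemma asserts. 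This is precisely what the explicit barriers for $q$ on $\Sigma_\eta$ accomplish: the lateral boundary of $\Sigma_\eta$ lies at fixed $\phi\approx\sqrt{\eta/2\gamma}$, where the outer convergence supplies the boundary data, and the comparison functions $Q(z)\pm L^{\pm}(z)/s$ transport that control inward to $z=O(1)$ while simultaneously forcing convergence to $Q=\tilde P'$. To make your scheme work you would have to supply this propagation step, at which point you would have reproduced the paper's proof; and you would still need to address the sign issue in the coefficient $\tfrac{\gamma-1}{2\gamma}$ for $\gamma\in(0,1)$, which your proposal never encounters.
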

\begin{proof}[Proof of Lemma \ref{lem:type2}]

We show that $\tilde p(z,\tau)$ converges uniformly to $\tilde{P}\left((\gamma+1)\tilde A z \right)$ as $\tau\to\infty$ for bounded $z\geq 0$. By the Fundamental Theorem of Calculus and $\tilde P(0) = 0$, it suffices to show that $\tilde{p}_z(z,\tau)$ converges uniformly to $\tilde{P}'\left((\gamma+1)\tilde{A} z\right)$ as $\tau\nearrow\infty$ for bounded $z\geq 0$. To this end, let us introduce a new ``time'' variable 
\begin{align*}
s:=\frac{e^{2\gamma\tau}}{2\gamma}.
\end{align*}
In terms of $s$,  the function $\tilde p$ defined in \eqref{eq:ptilde} satisfies the PDE
\begin{align}\label{eq:p_s}
\left. \p_s \right\vert_z \tilde p & = \frac{\tilde{p}_{zz}}{1+\tilde{p}^2_z} + \frac{n-1}{z}\tilde p_{z} - (\gamma+1)\tilde A + \frac{\gamma-1}{2\gamma} \frac{1}{s} (\tilde p - z\tilde p_z).
\end{align}
The quantity $q(z,s):=\tilde{p}_z(z,\tau)$ satisfies $\mathcal{B}[q]=0$, where
\begin{align}\label{eq:P[q]}
\mathcal{B}[q] & = \frac{\p q}{\p s} + \frac{\gamma-1}{2\gamma} \frac{1}{s} z q_z - \frac{\p}{\p z}\left( \frac{q_z}{1+q^2} + \frac{n-1}{z} q \right).
\end{align} 
We note that equations \eqref{eq:p_s} and \eqref{eq:P[q]} are similar to equations (7.13) and (7.14) in \cite{AV97}. Indeed, we see that the coefficient $\frac{\gamma-1}{2\gamma}$ here is replaced by $\frac{m-1}{m-2}$ in \cite{AV97}. 

The proof in \cite[pp.51--58]{AV97} applies to our case \emph{mutatis mutandis} (see a summary of the argument for the convergence of $q$ in \cite{IWZ19a}*{Appendix B}) except for the construction of a supersolution for the equation $\mathcal{B}[q]=0$. The construction in \cite{AV97} (p.53) uses the fact that the constant coefficient is $\frac{m-1}{m-2}>0$ for $m\geq 3$, but in this paper the constant coefficient is $\frac{\gamma-1}{2\gamma}$, where $\gamma>0$, and it can be negative if $\gamma\in(0,1)$. Here, we construct a supersolution on the domain $\Sigma_\eta:=\{(z,s):0\leq z\leq \sqrt{\eta s}\}$, where $\eta>0$ is a constant to be chosen, as follows. Let $Q(z)=\tilde{P}'(z)$ and define
\begin{align*}
q^{+}(z,s) = Q(z)+\frac{1}{s}L^+(z),
\end{align*} 
where $L^+(z)$ solves the ODE (which is just equation (7.17) in \cite{AV97})
\begin{align}
\frac{(L^+)'(z)}{1+Q(z)^2} + \left(\frac{n-1}{z} -\frac{2Q(z)Q'(z)}{(1+(Q(z)^2))^2} \right) = (\theta^+) \frac{z^2}{2}
\end{align}
with $L(0)=0$ and $\theta^+$ is a constant to be chosen. Then following the rest of the argument on p.52--53 in \cite{AV97}, we have
\begin{align*}
\mathcal{B}\left[Q(z)+L^+(z)/s\right]& \geq \left(-\theta^+ + \frac{\gamma-1}{2\gamma}Q'(z) - C\eta \right),
\end{align*}
where $C$ is some constant and there is another constant $C_1<\infty$ such  that 
\begin{align*}
\sup_{z>0} \left\vert \frac{\gamma-1}{2\gamma}Q'(z) \right\vert\leq C_1
\end{align*}
because $\tilde{P}(z)$ solves the ODE-IVP \eqref{eq:tildeP}. Then we can choose $\eta$ small so that $-C\eta\geq -1$ and so
\begin{align*}
\mathcal{B}\left[Q(z)+L^+(z)/s\right]& \geq \left(-\theta^+ - C_1 - 1\right) \ >0,
\end{align*}
on $\Sigma_\eta$ as long as we choose $-\theta^+ > C_1 + 1$.

So the lemma is proved.
\end{proof}

Lemma \ref{lem:type2} implies that a smooth convex MCF solution expressed in $y(z,\tau)$ satisfies the following asymptotics: on a compact $z$ interval (in the interior region), as $\tau\nearrow\infty$, 
\begin{align*}
y (z,\tau) & = \tilde A - e^{-\gamma\tau} \tilde{p}(0,\tau) + e^{-2\gamma\tau} \frac{1}{(\gamma+1)\tilde A} \tilde P\left( (\gamma+1)\tilde A z \right)\\
& = y(0,\tau) + e^{-2\gamma\tau} \frac{1}{(\gamma+1)\tilde A} \tilde P\left( (\gamma+1)\tilde A z \right).
\end{align*}
So item (2) of Theorem \ref{thmmain} is proved. This expansion is indeed valid for higher order derivatives in light of higher order estimates involved in the proof of Lemma \ref{lem:type2}.

Item (2) implies that at $t\nearrow \infty$, for $z\in[0, R_1]$, our MCF solution necessarily blows up at the rate predicted by the formal solution $e^{-2\gamma\tau} \frac{\tilde P\left( (\gamma+1)\tilde A z \right)}{(\gamma+1)\tilde A}$ (cf. Section \ref{formal}), for which $\mathfrak{R}\leqslant 1$ according to \cite{ADS19}*{Lemma 3.5}. In particular, $\kappa_1=\kappa_n=(\gamma+1)\tilde A (2t+1)^{(\gamma-1)/2}$ at the tip. If $z=u(2t+1)^{(\gamma-1)/2}\geqslant R_1$, then
\begin{align*}
\kappa_1=u^{-1}(1+u_x^2)^{-1/2} \leqslant R_1^{-1}(2t+1)^{(\gamma-1)/2}.
\end{align*}
Then it follows from \eqref{eq:R1} and \eqref{eq:RleqC} that $\mathfrak{R}=\kappa_n/\kappa_1\leqslant CR_1^{-3}$, and hence
\begin{align*}
\kappa_n \leqslant CR_1^{-3} \kappa_1 \leqslant  C R_1^{-4}(2t+1)^{(\gamma-1)/2}< (\gamma+1)\tilde A(2t+1)^{(\gamma-1)/2}.
\end{align*}
So as $t\nearrow\infty$, the highest curvature of this MCF solutions occurs at the tip and blows up at the Type-IIb rate $(2t+1)^{-(\gamma-1)/2}$, which proves part (1) of Theorem \ref{thmmain}.

Therefore, Theorem \ref{thmmain} is proved.
\end{proof}

\bibliography{mcf-type2b_bib}
\bibliographystyle{amsplain}

\end{document}